\documentclass[11pt]{article}
\usepackage{smile}

\usepackage{fullpage}
\usepackage{lscape}
\usepackage{bigints}
\usepackage{framed}
\usepackage{mdframed}
\usepackage{enumerate}
\usepackage[T1]{fontenc}
\usepackage{moresize}
\usepackage{bm}
\usepackage{bbm}
\usepackage{dsfont}
\usepackage{amsmath}
\usepackage{amssymb}
\usepackage{amsthm}
\usepackage{amsfonts}
\usepackage{stmaryrd}
\usepackage{array}
\usepackage{mathrsfs}
\usepackage{mathtools} 
\usepackage{extarrows}
\usepackage{stackrel}
\usepackage{relsize,exscale}
\usepackage{scalerel}
\usepackage[nodisplayskipstretch]{setspace}
\usepackage{color}
\usepackage[usenames,dvipsnames]{xcolor}
\usepackage{cancel}
\usepackage{soul}
\usepackage{undertilde}
\usepackage{xfrac}
\usepackage{siunitx}
\usepackage{graphicx}
\usepackage{float}
\usepackage{rotating}
\usepackage{subcaption}
\usepackage{overpic}
\usepackage[all]{xy}
\usepackage{tikz}
\usetikzlibrary{arrows,matrix,positioning,calc,automata,patterns}
\usepackage{booktabs}
\usepackage{dcolumn}
\usepackage{multirow}
\usepackage{diagbox}
\usepackage{tabularx}
\usepackage{verbatim}
\usepackage{listings}
\usepackage[ruled,vlined]{algorithm2e}
\usepackage{fancyvrb}
\usepackage{hyperref}
\usepackage[round]{natbib}
\usepackage{sectsty}

\hypersetup{
    bookmarks=true,         
    unicode=false,          
    pdftoolbar=true,        
    pdfmenubar=true,        
    pdffitwindow=false,     
    pdfstartview={FitH},    
    pdftitle={My title},    
    pdfauthor={Author},     
    pdfsubject={Subject},   
    pdfcreator={Creator},   
    pdfproducer={Producer}, 
    pdfkeywords={key1, key2}, 
    pdfnewwindow=true,      
    colorlinks=true,        
    linkcolor=blue,         
    citecolor=blue,         
    filecolor=blue,         
    urlcolor=cyan           
}

\usepackage{stackengine}
\stackMath
\newcommand\tenq[2][1]{%
\def\useanchorwidth{T}%
\ifnum#1>1%
\stackunder[0pt]{\tenq[\numexpr#1-1\relax]{#2}}{\!\scriptscriptstyle\thicksim}%
\else%
\stackunder[1pt]{#2}{\!\scriptstyle\thicksim}%
\fi%
}

\makeatletter
\DeclareRobustCommand\widecheck[1]{{\mathpalette\@widecheck{#1}}}
\def\@widecheck#1#2{%
    \setbox\z@\hbox{\m@th$#1#2$}%
    \setbox\tw@\hbox{\m@th$#1%
       \widehat{%
          \vrule\@width\z@\@height\ht\z@
          \vrule\@height\z@\@width\wd\z@}$}%
    \dp\tw@-\ht\z@
    \@tempdima\ht\z@ \advance\@tempdima2\ht\tw@ \divide\@tempdima\thr@@
    \setbox\tw@\hbox{%
       \raise\@tempdima\hbox{\scalebox{1}[-1]{\lower\@tempdima\box
\tw@}}}%
    {\ooalign{\box\tw@ \cr \box\z@}}}
\makeatother

\def\tr{\mathop{\text{tr}}\kern.2ex}

\def\P{{\mathrm P}}

\def\E{{\mathrm E}}

\renewcommand{\Pr}{\mathrm{P}}

\newcommand{\op}{\mathrm{op}}

\newcommand{\indep}{\perp \!\!\!\perp}

\newcolumntype{L}[1]{>{\raggedright\let\newline\\\arraybackslash\hspace{0pt}}m{#1}}
\newcolumntype{C}[1]{>{  \centering\let\newline\\\arraybackslash\hspace{0pt}}m{#1}}
\newcolumntype{R}[1]{>{ \raggedleft\let\newline\\\arraybackslash\hspace{0pt}}m{#1}}
\newcolumntype{d}[1]{D{.}{.}{#1}}
\newcolumntype{H}{>{\setbox0=\hbox\bgroup}c<{\egroup}@{}}
\newcolumntype{Z}{>{\setbox0=\hbox\bgroup}c<{\egroup}@{\hspace*{-\tabcolsep}}}
\newcolumntype{b}{X}
\newcolumntype{s}{>{\hsize=.5\hsize}X}

\def\PP{{\mathbb{P}}}

\numberwithin{equation}{section}

\newtheorem{theorem}{Theorem}[section]

\newtheorem{proposition}{Proposition}[section]
\newtheorem{assumption}{Assumption}[section]
\newtheorem{corollary}{Corollary}[section]
\newtheorem{innercustomgeneric}{\customgenericname}
\providecommand{\customgenericname}{}
\newcommand{\newcustomtheorem}[2]{%
  \newenvironment{#1}[1]
  {%
   \renewcommand\customgenericname{#2}%
   \renewcommand\theinnercustomgeneric{##1}%
   \innercustomgeneric
  }
  {\endinnercustomgeneric}
}
\newcustomtheorem{customdefinition}{Definition}
\newcustomtheorem{customdefinitions}{Definitions}
\newcustomtheorem{customtheorem}{Theorem}
\newcustomtheorem{customassumption}{Assumption}
\newcustomtheorem{customlemma}{Lemma}
\newcustomtheorem{customexample}{Example}
\theoremstyle{definition}

\newtheorem{remark}{Remark}[section]

\makeatletter
\newcommand{\mylabel}[2]{#2\def\@currentlabel{#2}\label{#1}}
\makeatother

\usepackage{enumitem}

\graphicspath{{./fig3/}}

\allowdisplaybreaks

\begin{document}

\setlength{\abovedisplayskip}{5pt}
\setlength{\belowdisplayskip}{5pt}
\setlength{\abovedisplayshortskip}{5pt}
\setlength{\belowdisplayshortskip}{5pt}
\hypersetup{
  colorlinks,
  breaklinks,
  urlcolor=blue,
  linkcolor=blue,
  pdftitle={Imputation is all you need},
  pdfauthor={Fang Han and Peng Ding}
}


\title{\LARGE Imputation is all you need:\\
double robustness, semiparametric efficiency, and automatic covariate balance for estimating the average treatment effect}

\author{
Fang Han\thanks{Department of Statistics, University of Washington, Seattle, WA 98195, USA; e-mail: {\tt fanghan@uw.edu}}
~~~{\rm and}~~
Peng Ding\thanks{Department of Statistics, University of California, Berkeley, CA 94720, USA; email: {\tt pengdingpku@berkeley.edu}}\thanks{The authors thank Avi Feller for bringing \cite{imbens2005mean} to their attention.}}

\date{\today}

\maketitle

\vspace{-1em}

\begin{abstract}
Imputation-based causal estimation is typically viewed as relying exclusively on an outcome model, in contrast to augmented inverse-probability weighting, whose consistency is protected by fitting two nuisance models.  This paper argues that this view can be misleading by highlighting a hidden dual weighting structure in least-squares sieve regression imputation. Although only outcome regressions are explicitly fitted, the resulting imputation estimator admits an exact weighting representation whose induced weights balance every function in the sieve space and the corresponding population weighting functions are the $L^2$ projections of the inverse propensity scores onto the same sieve space.  This projection structure yields an implicit form of double robustness and, under standard sieve approximation and growth conditions, asymptotic linearity with the efficient influence function.  Thus, weighting, covariate balance, double robustness, and semiparametric efficiency can all emerge from imputation alone through the geometry of least-squares projection.
\end{abstract}

\noindent{\bf Keywords}: 
augmented inverse-probability weighting; 
causal inference; 
double machine learning; 
sieve regression.

\section{Introduction}

Imputation provides a natural way to conceptualize causal estimation under the potential-outcomes framework.  With a binary treatment, only one of the two potential outcomes is observed for each unit.  A natural strategy is therefore to predict the missing potential outcome using units receiving the opposite treatment and then proceed as if the resulting completed data were fully observed.  This perspective has a long history in the literature of missing data and causal inference  \citep{rubin1978bayesian, rubin2004multiple,imbens2015causal}, and imputation-based methods continue to play an important role in empirical practice; see, for example, \cite{imbens2024causal}.

Several early papers developed the theoretical foundations of the imputation approach in important settings.  \cite{heckman1998matching} studied nonparametric regression-based imputation and developed its large-sample theory.  \cite{hahn1998role}, in examining the role of the propensity score in efficient estimation, further showed that efficient estimators of average treatment effects can be constructed by averaging data completed through nonparametric imputation.  \cite{imbens2005mean} studied least-squares sieve regression imputation and established its semiparametric efficiency.  \cite{abadie2006large} and \cite{lin2021estimation} subsequently developed large-sample theories for nearest-neighbor matching estimators, clarifying both their interpretation as imputation procedures and the conditions under which they fail to, or can, attain semiparametric efficiency.

Almost in parallel, \citet{robins1994estimation} and \citet{robins1995semiparametric}
developed inverse-probability weighting (IPW) estimators and their efficient augmentations. 
These estimators involve two nuisance components: a model for the treatment-assignment mechanism and a model for the conditional outcome distribution.  The resulting augmented inverse-probability weighting (AIPW) estimators possess an important robustness property \citep{scharfstein1999adjusting,bang2005doubly}: they can remain consistent when either the propensity-score model or the outcome-regression model is correctly specified, without requiring both to be correct.  This property is now known as \emph{double robustness}.  In modern causal inference, the augmented representation is also closely connected to the efficient influence function (EIF) and its associated orthogonality with respect to nuisance perturbations \citep{tsiatis2006semiparametric}; related principles underlie regression-adjusted imputation \citep{abadie2011bias,lin2022regression} and, more recently, double/debiased machine learning \citep{chernozhukov2018double}.

This history has led to a familiar conceptual distinction.  Pure imputation is typically viewed as an outcome-regression procedure whose validity depends on the quality of the model used to predict the missing potential outcomes, whereas doubly robust estimators combine an outcome regression with a model for the treatment-assignment mechanism \citep{bang2005doubly,cao2009improving}.  The latter remain consistent if either the outcome-regression model or the propensity-score model is correctly specified, without requiring both to be correct \citep{bang2005doubly}.  From this conventional perspective, imputation alone would not appear to enjoy the same protection unless propensity-score information or an explicit augmentation term is introduced; see, for example, \cite{seaman2018introduction} for a concise account of this conventional view.

We argue that this distinction is misleading.  Consider fitting the outcome regression separately within the treated and control samples and imputing the missing potential outcomes using the resulting sieve regressions.  We show that, even though the propensity score is never explicitly estimated, the second nuisance component is already embedded in the procedure.  More precisely, we establish that
\begin{enumerate}[itemsep=-.5ex,label=(\roman*)]
\item the averages of the imputed outcomes admit exact finite-sample weighting representations in the sense of  \citet{chattopadhyay2021implied};
\item the induced weights exactly balance every function in the sieve space in the sense of \citet[Equation (4)]{chan2016globally};
\item at the population level, the corresponding weighting functions are the weighted $L^2$ projections of the inverse propensity scores onto the same sieve space in the sense of \citet[Equation (4.9)]{imbens2005mean} and \citet[Proof of Theorem 2]{wang2023distributed}.
\end{enumerate}
Thus, the inverse-propensity component that appears explicitly in AIPW arises automatically from the least-squares normal equations.

This set of observations immediately yields a double-robustness property, echoing the influential discussion in \citet[Section~3]{robins2007comment}.  The population bias of the imputation estimator admits an exact representation involving the product of two approximation residuals: one for the outcome regression and the other for the corresponding inverse propensity score, or equivalently the propensity odds.  Consequently, the bias is bounded by the product of the two associated weighted $L^2$ approximation errors.  The familiar double-robustness structure therefore emerges from imputation \emph{alone}, without explicitly estimating a propensity score or introducing an augmentation term.

The same mechanism also explains semiparametric efficiency.  Under standard sieve approximation and growth conditions, the implicit weighting functions converge to the inverse propensity scores, while the fitted regressions converge to the corresponding outcome regressions.  The resulting imputation estimator is asymptotically linear with influence function equal to the EIF for the average treatment effect (ATE), and hence attains the semiparametric efficiency bound.  This complements the efficiency results of \cite{hahn1998role} and \cite{heckman1998matching}, and also echoes an observation of \cite{kang2007demystifying}, who, in their empirical comparison of doubly robust methods, remarked that ``[n]one of the DR methods we tried, however, improved upon the performance of simple regression-based prediction of the missing values.'' 
This paper provides an additional theoretical explanation, complementing the efficiency result in \cite{imbens2005mean} and the robustness properties discussed in \citet[Section~3]{robins2007comment}, for why a seemingly simple regression-imputation procedure can perform so well: in the least-squares sieve setting, it already contains the weighting and orthogonality structure typically associated with explicitly doubly robust procedures.

Lastly, we emphasize that this paper builds substantially on a reinterpretation of the results of \cite{wang2023distributed}; see, also, \citet[Equation (4.9)]{imbens2005mean}.
In the context of mean estimation with missing responses, \cite{wang2023distributed} study least-squares sieve imputation and establish its semiparametric efficiency.  More importantly for the present paper, the argument surrounding their proof of Theorem 2 exploits the least-squares normal equations to obtain the essential product-bias bound and explicitly observes that the approximation bias is small whenever either the outcome regression or the inverse response probability is well approximated by the sieve.  

Our motivation for revisiting this phenomenon is therefore not to rediscover the product-bias calculation itself \citep{wang2004semiparametric, rotnitzky2021characterization}, but to isolate and develop the structural mechanism behind it within the potential-outcomes framework that is now standard in modern causal inference and more directly connected to contemporary causal methodology.  In particular, we show that the same least-squares geometry generates exact finite-sample balancing weights, whose population counterparts are weighted $L^2$ projections of the inverse propensity scores, and that this primal-dual structure yields arm-specific double robustness and recovers the EIF for the ATE.  From this perspective, weighting, covariate balance, double robustness, and semiparametric efficiency emerge as different manifestations of the same underlying projection structure.

\subsection{Other related work}

Our results are connected to three additional strands of research.  The first concerns the \emph{weighting interpretation of linear regression}.  Ordinary least squares can itself be written as a weighted estimator; see, for example, \cite{angrist2009mostly} and \cite{imbens2015matching}.  More strikingly, \citet[Section~3]{robins2007comment} and \cite{kline2011oaxaca} showed, in different settings, that regression estimators can exhibit robustness properties more commonly associated with weighting methods: the former for estimation of a population mean with incomplete data, and the latter for estimation of treatment effects.  This connection has subsequently been developed more systematically by \cite{chattopadhyay2021implied}.  A common theme in this literature is that regression can implicitly generate balancing weights even when no weighting procedure is constructed explicitly.

The second strand concerns \emph{covariate-balancing weights}.  Rather than estimating a propensity score and subsequently inverting it, these methods construct weights directly to balance selected functions of the covariates \citep{hainmueller2012entropy,imai2014covariate,zubizarreta2015stable}.  Subsequent work showed that covariate-balancing methods can also achieve double robustness and semiparametric efficiency, properties more commonly associated with AIPW estimators \citep{zhao2017entropy,chan2016globally}.  More recently, \cite{bruns2026augmented} incorporated an explicit augmentation step into covariate-balancing estimators, paralleling the role of bias correction in imputation-based methods \citep{abadie2011bias,lin2022regression}. 

A third, more general perspective is provided by the recent literature on estimation of \emph{Riesz representers}.  For the ATE, the relevant arm-specific Riesz representers are precisely the inverse propensity scores.  Conventional AIPW approaches obtain these objects by estimating the propensity score and then inverting it, which can be numerically unstable, particularly when the estimated score is close to zero (cf. Basu's elephant due to \citet{basu2010essay}).  Rather than proceeding through this inversion step, \cite{chernozhukov2022debiased} developed regularized estimators of Riesz representers for global and local regression functionals, while \cite{chernozhukov2026adversarial} proposed an adversarial approach that allows the representer to be learned over rich function classes.  Similar in spirit to the covariate-balancing literature, these approaches avoid direct propensity-score inversion, treat the regression function and its Riesz representer as two explicit nuisance components, estimate them separately, and then combine them through an orthogonal correction.

The present paper connects these three perspectives.  In particular, we argue that the assignment mechanism  need not be treated as a separate nuisance component, because its relevant weighted $L^2$ projection onto the sieve space is already generated by the regression fit itself.  Accordingly, the implicit weights studied here may be viewed simultaneously as balancing weights and as automatically constructed sieve approximations to the corresponding Riesz representers.

\subsection{Paper organization}

The remainder of the paper proceeds as follows.  Section~\ref{sec:setup} introduces the causal model and the sieve-imputation estimator.  Section~\ref{sec:theory} develops the exact implicit-weighting and covariate-balance representations and establishes implicit double robustness and semiparametric efficiency under explicit sieve-approximation and growth conditions.  Section~\ref{sec:discussion} concludes with a broader discussion of the implications of these results for modern least-squares regression methods. Section~\ref{sec:proofs} contains the proofs. 

\section{Setup and sieve imputation}\label{sec:setup}

\subsection{Causal setup}

We adopt the standard potential-outcomes framework with a binary treatment under the following assumption. 

\begin{assumption}\label{ass:dgp}
Assume that $
\{X_i,W_i,Y_i(0),Y_i(1)\}_{i=1}^n
$
are independent copies of $(X,W,Y(0),Y(1))$, where $Y(0)$ and $Y(1)$ are the scalar-valued potential outcomes under control and treatment, respectively, $W\in\{0,1\}$ is the treatment indicator, and $X\in\mathcal X\subseteq\mathbb R^d$ is a vector of pretreatment covariates.  
\end{assumption}

The observed data are
\[
Z_i:=(X_i,W_i,Y_i),
\qquad
Y_i:=W_iY_i(1)+(1-W_i)Y_i(0),
\qquad i\in[n]:=\{1,\ldots,n\}.
\]
For $\omega\in\{0,1\}$, let
\[
\mu_\omega:=\E\{Y(\omega)\}.
\]
Our parameter of interest is the ATE
\[
\tau:=\E\{Y(1)-Y(0)\}
=\mu_1-\mu_0.
\]
We focus on the canonical setting of unconfounded observational studies under the following assumption. 

\begin{assumption}[Unconfoundedness and overlap]\label{ass:unconf}
We have 
\[
\{Y(0),Y(1)\}\indep W\mid X.
\]
Moreover, there exists a constant $c_e>0$ such that
\[
c_e\le e(X)\le1-c_e
\qquad\text{almost surely},
\]
where $e(x):=\Pr(W=1\mid X=x)$ is the propensity score. 
\end{assumption}

It is convenient to write
\[
\delta_\omega:=\mathbbm 1(W=\omega),
\qquad
\pi_1(x):=e(x),
\qquad
\pi_0(x):=1-e(x),
\]
so that $\E(\delta_\omega\mid X=x)=\pi_\omega(x)$.  We also define the corresponding \emph{propensity-odds functions}
\[
\rho_\omega(x)
:=
\frac{1-\pi_\omega(x)}{\pi_\omega(x)},
\qquad
\omega\in\{0,1\},
\]
so that
\[
\rho_1(x)=\frac{1-e(x)}{e(x)},
\qquad
\rho_0(x)=\frac{e(x)}{1-e(x)}.
\]
Define the outcome-regression functions
\begin{align}\label{eq:outcome-reg}
m_\omega(x):=\E\{Y(\omega)\mid X=x\},
\qquad
\omega\in\{0,1\}.
\end{align}

Let $\P$ denote the data-generating law and $\PP_n$ the corresponding empirical measure.  For a generic measurable function $f$, write
\[
\P f:=\E\{f(Z)\},
\qquad
\PP_n f:=\frac1n\sum_{i=1}^n f(Z_i).
\]

\subsection{Sieve regression-based imputation}

For each positive integer $K$, let
\[
\mathcal V_K
:=
\operatorname{span}\{v_{1K},\ldots,v_{KK}\}
\]
denote a $K$-dimensional \emph{sieve}, or \emph{feature}, space, where the basis functions are allowed to depend on $K$, and write
\[
V_K(x)
:=
\bigl(v_{1K}(x),\ldots,v_{KK}(x)\bigr)^\top.
\]
We assume throughout that the constant function belongs to every sieve space and, without loss of generality, take
\[
v_{1K}(x)\equiv1,
\qquad
K=1,2,\ldots.
\]
As $K$ increases, the sieve spaces are assumed to be sufficiently rich to approximate the relevant target functions, in the spirit of \cite{newey1997convergence} and \cite{chen2007large}.  In the language of modern machine learning, sieve regression may equivalently be viewed as constructing an increasingly rich \emph{feature map} $V_K(\cdot)$ and performing least-squares regression on the resulting features.

For $\omega\in\{0,1\}$, define the least-squares sieve estimator
\begin{equation}\label{eq:beta-hat}
\widehat\beta_\omega
:=
\arg\min_{\beta\in\mathbb R^K}
\PP_n\left[
\delta_\omega
\{Y-V_K(X)^\top\beta\}^2
\right],
\end{equation}
and let
\[
\widehat m_\omega(x)
:=
V_K(x)^\top\widehat\beta_\omega
\]
be the resulting predictor of $Y(\omega)$ at $X=x$.  For each unit $i\in[n]$, the missing potential outcomes are imputed as
\[
\widehat Y_i(1)
=
W_iY_i+(1-W_i)\widehat m_1(X_i),
\qquad
\widehat Y_i(0)
=
(1-W_i)Y_i+W_i\widehat m_0(X_i).
\]
The resulting sieve-imputation estimator of the ATE is
\begin{equation}\label{eq:tau-imp}
\widehat\tau_{\mathrm{imp}}
:=
\frac1n\sum_{i=1}^n
\{\widehat Y_i(1)-\widehat Y_i(0)\}.
\end{equation}
Equivalently,
\begin{align}\label{eq:mu-hat}
\widehat\tau_{\mathrm{imp}}
=
\widehat\mu_1-\widehat\mu_0,
\qquad {\rm with}~~
\widehat\mu_\omega
:=
\PP_n\left[
\delta_\omega Y+(1-\delta_\omega)\widehat m_\omega(X)
\right],
\end{align}
which coincides with the Imbens-Newey-Ridder imputation estimator \citep[Section~3.3]{imbens2005mean}.

For the population analysis, define
\begin{equation}\label{eq:beta-star}
\beta_{\omega,K}^*
:=
\arg\min_{\beta\in\mathbb R^K}
\E\left[
\delta_\omega
\{Y-V_K(X)^\top\beta\}^2
\right],
\qquad
\omega\in\{0,1\},
\end{equation}
and let
\[
m_{\omega,K}(x)
:=
V_K(x)^\top\beta_{\omega,K}^*.
\]
Under unconfoundedness (cf. Assumption \ref{ass:unconf} above), $m_{\omega,K}$ is hence equivalently the weighted $L^2$ projection of $m_\omega$ in \eqref{eq:outcome-reg} onto $\mathcal V_K$ under the inner product
\[
\langle f,g\rangle_\omega
:=
\E\{\delta_\omega f(X)g(X)\}.
\]
The corresponding population imputation functional is
\[
\tau_K
:=
\E\Big[
WY+(1-W)m_{1,K}(X)
-(1-W)Y-Wm_{0,K}(X)
\Big].
\]

We close this section by clarifying the connection between $\widehat\tau_{\mathrm{imp}}$ in \eqref{eq:tau-imp} and some earlier imputation and matching estimators.  \cite{heckman1998matching} studied matching as a nonparametric regression procedure, using kernel and local-polynomial methods to impute missing potential outcomes and developing the corresponding large-sample theory.  \cite{hahn1998role} showed that nonparametric imputation can attain the semiparametric efficiency bound for the ATE, although his construction estimates the outcome regressions through ratios involving a nonparametric estimate of the propensity score.  \cite{abadie2006large}, in turn, interpreted nearest-neighbor matching as an imputation procedure and showed that its nonsmooth and local nature generates a matching bias that can preclude $\sqrt n$-consistency, while matching with a fixed number of neighbors is generally not semiparametrically efficient.  Subsequent work by \cite{abadie2011bias}, \cite{lin2021estimation}, and \cite{lin2022regression} augments such local imputation procedures with additional bias-correction steps, thereby bringing them closer in structure to AIPW estimators.

In contrast, the imputation procedure proposed by \cite{imbens2005mean} and studied here fits the outcome regressions directly by least-squares sieve regression, without explicitly estimating the propensity score or constructing local matches.  In particular, unlike the procedures in \cite{abadie2011bias}, \cite{lin2021estimation}, and \cite{lin2022regression}, \emph{no} additional bias-correction step is required.  Explaining the mechanism underlying this phenomenon is the main focus of the next section.

\section{Main results}\label{sec:theory}

\subsection{The implicit weighting structure}\label{sec:implicit-weighting}

This section shows that least-squares sieve imputation admits an exact finite-sample weighting representation (Proposition~\ref{prop:implicit-weighting}), whose population counterpart satisfies the three-way balancing identity in \citet[Equation~(4)]{chan2016globally} (Proposition~\ref{prop:population-balance}) and is the weighted $L^2$ projection of the inverse propensity score onto the same sieve space (Proposition~\ref{prop:inverse-ps-projection}).  Notably, all of these properties emerge automatically as by-products of the sieve-imputation structure, rather than being imposed by design.

In detail, define
\[
\widehat Q_{\omega,K}
:=
\PP_n\{\delta_\omega V_K(X)V_K(X)^\top\},
\qquad
\overline V_K
:=
\PP_n V_K(X),
\]
and, whenever $\widehat Q_{\omega,K}$ is nonsingular, let
\[
\widehat w_{\omega,K}(x)
:=
V_K(x)^\top
\widehat Q_{\omega,K}^{-1}
\overline V_K
\]
denote the corresponding \emph{implicit weighting function}.  With the notation above,
\[
\widehat\beta_\omega
=
\widehat Q_{\omega,K}^{-1}
\PP_n\!\left\{
\delta_\omega V_K(X)Y
\right\},
\qquad
\widehat\mu_\omega
:=
\PP_n\!\left[
\delta_\omega Y
+
(1-\delta_\omega)V_K(X)^\top\widehat\beta_\omega
\right].
\]

Proposition~\ref{prop:implicit-weighting} below shows that sieve imputation is algebraically equivalent to a weighting estimator whose induced weights exactly balance every function in the sieve space, even though no propensity score is explicitly estimated.  This may therefore be viewed as an imputation counterpart to analogous balancing-weight arguments in, e.g., \citet[Proposition~3]{chattopadhyay2021implied} and \citet[Proposition~3.1]{bruns2026augmented}.

\begin{proposition}[Exact implicit weighting and balance]
\label{prop:implicit-weighting}
Suppose that $v_{1K}\equiv 1$ and $\widehat Q_{\omega,K}$ is nonsingular.  Then, for each $\omega\in\{0,1\}$, the imputation estimator $\widehat\mu_\omega$ in \eqref{eq:mu-hat} admits the weighting representation
\[
\widehat\mu_\omega
=
\PP_n\{\delta_\omega\widehat w_{\omega,K}(X)Y\}.
\]
Moreover, the implicit weights exactly balance every function in the sieve space:
\[
\PP_n\{\delta_\omega\widehat w_{\omega,K}(X)V_K(X)\}
=
\PP_n V_K(X).
\]
\end{proposition}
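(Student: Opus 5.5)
The plan is to prove both claims by direct linear algebra from the normal equations of the least-squares problem \eqref{eq:beta-hat}, with the constant basis function $v_{1K}\equiv1$ doing the key work. Write $\widehat\mu_\omega=\PP_n(\delta_\omega Y)+\PP_n\{(1-\delta_\omega)V_K(X)\}^\top\widehat\beta_\omega$. We then split $\PP_n\{(1-\delta_\omega)V_K(X)\}=\overline V_K-\PP_n\{\delta_\omega V_K(X)\}$ and treat the two resulting terms separately.

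For the first term, substitute $\widehat\beta_\omega=\widehat Q_{\omega,K}^{-1}\PP_n\{\delta_\omega V_K(X)Y\}$. This gives
\[
\overline V_K^\top\widehat\beta_\omega=\PP_n\{\delta_\omega V_K(X)^\top\widehat Q_{\omega,K}^{-1}\overline V_K\,Y\}=\PP_n\{\delta_\omega\widehat w_{\omega,K}(X)Y\},
\]
using the symmetry of $\widehat Q_{\omega,K}$. It therefore suffices to show that the remaining terms cancel:
\[
\PP_n(\delta_\omega Y)-\PP_n\{\delta_\omega V_K(X)\}^\top\widehat\beta_\omega=\PP_n[\delta_\omega\{Y-\widehat m_\omega(X)\}]=0.
\]
This is exactly the first coordinate of the normal equations $\PP_n[\delta_\omega V_K(X)\{Y-V_K(X)^\top\widehat\beta_\omega\}]=0$. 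Because $v_{1K}\equiv1$, that coordinate says the in-arm residuals average to zero, which is the only point where the intercept assumption enters.

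For the balance identity, compute directly:
\[
\PP_n\{\delta_\omega V_K(X)\widehat w_{\omega,K}(X)\}=\PP_n\{\delta_\omega V_K(X)V_K(X)^\top\}\widehat Q_{\omega,K}^{-1}\overline V_K=\widehat Q_{\omega,K}\widehat Q_{\omega,K}^{-1}\overline V_K=\overline V_K.
\]
As an alternative consistency check, one could apply the weighting representation with $Y$ replaced by each $v_{jK}(X)$: the fit is then exact in the sieve space, so the imputed mean equals $\PP_n v_{jK}(X)$.

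I do not expect a genuine obstacle. The only delicate step is recognizing that the residual term vanishes, which requires the constant function to lie in the sieve; without it the representation would carry an extra term $\PP_n[\delta_\omega\{Y-\widehat m_\omega(X)\}]$. Nonsingularity of $\widehat Q_{\omega,K}$ is used only so that $\widehat\beta_\omega$ and $\widehat w_{\omega,K}$ are well defined.
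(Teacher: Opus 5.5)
Your proposal is correct and follows essentially the same route as the paper: the intercept coordinate of the normal equations gives $\PP_n[\delta_\omega\{Y-\widehat m_\omega(X)\}]=0$, so $\widehat\mu_\omega=\overline V_K^\top\widehat\beta_\omega$. Substituting $\widehat\beta_\omega=\widehat Q_{\omega,K}^{-1}\PP_n\{\delta_\omega V_K(X)Y\}$ then gives the weighting form, and balance follows from $\widehat Q_{\omega,K}\widehat Q_{\omega,K}^{-1}\overline V_K=\overline V_K$; the only cosmetic difference is that you split $\PP_n\{(1-\delta_\omega)V_K(X)\}$ explicitly, whereas the paper first writes $\widehat\mu_\omega=\PP_n\widehat m_\omega(X)$.
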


At the population level, define the analogues of $\widehat Q_{\omega,K}$, $\overline V_K$, and $\widehat w_{\omega,K}$ by
\begin{equation}\label{eq:Q-q-w}
Q_{\omega,K}
:=
\E\{\delta_\omega V_K(X)V_K(X)^\top\},
\qquad
q_K
:=
\E\{V_K(X)\},
\end{equation}
and
\begin{equation}\label{eq:w-pop}
w_{\omega,K}(x)
:=
V_K(x)^\top Q_{\omega,K}^{-1}q_K.
\end{equation}
Equation~\eqref{eq:population-balance-function} below then shows that $w_{1,K}$ and $w_{0,K}$ satisfy, on the sieve space $\mathcal V_K$, the population three-way balancing identity in \citet[Equation~(4)]{chan2016globally}.

\begin{proposition}
\label{prop:population-balance}
Suppose that $\E\|V_K(X)\|^2<\infty$, where $\|\cdot\|$ denotes the Euclidean norm, and $Q_{\omega,K}$ is nonsingular.  Then, for each $\omega\in\{0,1\}$ and every $a\in\mathcal V_K$,
\[
\E\!\left\{
\delta_\omega
w_{\omega,K}(X)
a(X)
\right\}
=
\E\{a(X)\}.
\]
Consequently, for every $a\in\mathcal V_K$,
\begin{equation}\label{eq:population-balance-function}
\E\!\left\{
Ww_{1,K}(X)a(X)
\right\}
=
\E\!\left\{
(1-W)w_{0,K}(X)a(X)
\right\}
=
\E\{a(X)\}.
\end{equation}
\end{proposition}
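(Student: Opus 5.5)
The proof is a direct linear-algebra computation at the population level, mirroring the finite-sample balance in Proposition~\ref{prop:implicit-weighting}. The first step is to check that all quantities are well defined. Since $|\delta_\omega|\le 1$, the assumption $\E\|V_K(X)\|^2<\infty$ gives finiteness of every entry of $Q_{\omega,K}$ by Cauchy--Schwarz. It also gives $\E\|V_K(X)\|<\infty$, so $q_K$ is finite. Any $a\in\mathcal V_K$ can be written as $a(x)=V_K(x)^\top\gamma$ for some $\gamma\in\mathbb R^K$, so $a(X)$ is integrable. Moreover $\delta_\omega w_{\omega,K}(X)a(X)$ is a finite linear combination of terms $\delta_\omega v_{jK}(X)v_{lK}(X)$, each integrable by Cauchy--Schwarz. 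Hence both sides of the claimed identity are finite.

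The main computation uses the definition \eqref{eq:w-pop} together with the symmetry of $Q_{\omega,K}$:
\[
\E\{\delta_\omega w_{\omega,K}(X)a(X)\}
=\E\{\delta_\omega q_K^\top Q_{\omega,K}^{-1}V_K(X)V_K(X)^\top\gamma\}
=q_K^\top Q_{\omega,K}^{-1}Q_{\omega,K}\gamma
=q_K^\top\gamma
=\E\{a(X)\}.
\]
The only step requiring care is pulling the constant vectors $q_K^\top Q_{\omega,K}^{-1}$ and $\gamma$ outside the expectation. This is justified entrywise by the integrability established above.

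The consequence \eqref{eq:population-balance-function} follows by specializing to each arm. Taking $\omega=1$, where $\delta_1=W$, gives $\E\{Ww_{1,K}(X)a(X)\}=\E\{a(X)\}$. Taking $\omega=0$, where $\delta_0=1-W$, gives $\E\{(1-W)w_{0,K}(X)a(X)\}=\E\{a(X)\}$. Equating the two yields the three-way identity.

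No step is a genuine obstacle. The only point needing attention is the integrability bookkeeping, which the moment condition $\E\|V_K(X)\|^2<\infty$ is designed to cover. Note also that neither unconfoundedness nor the condition $v_{1K}\equiv1$ is needed here: the identity is purely a consequence of the normal-equation structure.
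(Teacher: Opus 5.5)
Your proposal is correct and follows essentially the same route as the paper, which first shows the vector identity $\E\{\delta_\omega w_{\omega,K}(X)V_K(X)\}=Q_{\omega,K}Q_{\omega,K}^{-1}q_K=q_K$ and then extends it to $a=c^\top V_K$ by linearity; your computation simply merges these two steps. Your integrability check is an addition the paper omits, and your remark that neither unconfoundedness nor $v_{1K}\equiv1$ is used is accurate.
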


Proposition~\ref{prop:inverse-ps-projection} below further shows that the population weights $w_{\omega,K}$ underlying the implicit weighting and balancing structures in Propositions~\ref{prop:implicit-weighting} and~\ref{prop:population-balance} are the unique weighted $L^2$ projections of $1/\pi_\omega$ onto $\mathcal V_K$.  

\begin{proposition}
\label{prop:inverse-ps-projection}
Suppose that $Q_{\omega,K}$ is nonsingular and
\[
\E\{\pi_\omega(X)^{-1}\}<\infty.
\]
Then $w_{\omega,K}$ in \eqref{eq:w-pop} is the unique weighted $L^2$
projection of $1/\pi_\omega$ onto $\mathcal V_K$, where the weighted
inner product is
\[
\langle f,g\rangle_\omega
:=
\E\{\delta_\omega f(X)g(X)\}.
\]
Equivalently,
\begin{equation}\label{eq:inverse-ps-projection}
\E\left[
\delta_\omega V_K(X)
\left\{
\frac{1}{\pi_\omega(X)}
-
w_{\omega,K}(X)
\right\}
\right]
=0.
\end{equation}
Moreover, since $1\in\mathcal V_K$, $w_{\omega,K}-1$ is the unique
weighted $L^2$ projection of
$\rho_\omega=\pi_\omega^{-1}-1$ onto $\mathcal V_K$.
\end{proposition}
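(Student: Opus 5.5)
The plan is to reduce everything to the normal equations defining the weighted $L^2$ projection. The key observation is that, by iterated expectations and $\E(\delta_\omega\mid X)=\pi_\omega(X)$, for any integrable $g$ we have
\[
\E\{\delta_\omega g(X)\pi_\omega(X)^{-1}\}=\E\{g(X)\}.
\]
Applying this with $g=v_{kK}$ gives
\[
\E\{\delta_\omega V_K(X)\pi_\omega(X)^{-1}\}=q_K.
\]
Integrability here needs a brief check. We have $|\delta_\omega v_{kK}/\pi_\omega|\le |v_{kK}|/\pi_\omega$, and
\[
\E\{\delta_\omega v_{kK}^2/\pi_\omega^2\}=\E\{v_{kK}^2/\pi_\omega\}.
\]
I would handle this via Cauchy--Schwarz in the $\langle\cdot,\cdot\rangle_\omega$ geometry. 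Note that $\langle 1/\pi_\omega,1/\pi_\omega\rangle_\omega=\E\{\pi_\omega^{-1}\}<\infty$, so $1/\pi_\omega$ lies in the weighted $L^2$ space. Moreover, each $v_{kK}$ has finite $\langle\cdot,\cdot\rangle_\omega$-norm because $Q_{\omega,K}$ exists, implicitly requiring $\E\{\delta_\omega v_{kK}^2\}<\infty$. Hence
\[
\E|\delta_\omega v_{kK}\pi_\omega^{-1}|\le \{\E(\delta_\omega v_{kK}^2)\}^{1/2}\{\E(\pi_\omega^{-1})\}^{1/2}<\infty,
\]
which justifies the tower-property computation.

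Next I verify \eqref{eq:inverse-ps-projection} directly. We have
\[
\E\{\delta_\omega V_K(X)w_{\omega,K}(X)\}=\E\{\delta_\omega V_KV_K^\top\}Q_{\omega,K}^{-1}q_K=q_K,
\]
which matches the previous display, so the difference vanishes. Since $w_{\omega,K}\in\mathcal V_K$ and the residual $1/\pi_\omega-w_{\omega,K}$ is $\langle\cdot,\cdot\rangle_\omega$-orthogonal to every basis function, hence to all of $\mathcal V_K$, $w_{\omega,K}$ is a projection. This is essentially the content of Proposition~\ref{prop:population-balance} rephrased with $a/\pi_\omega$ inserted.

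For uniqueness, suppose $V_K^\top b$ also satisfies the orthogonality conditions. Subtracting gives $Q_{\omega,K}(b-Q_{\omega,K}^{-1}q_K)=0$, so nonsingularity forces equality of coefficient vectors and hence of the functions. Nonsingularity also means $\langle\cdot,\cdot\rangle_\omega$ is a genuine inner product on $\mathcal V_K$, so no two distinct elements of $\mathcal V_K$ are identified.

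Finally, for the odds statement, I write $\rho_\omega-(w_{\omega,K}-1)=1/\pi_\omega-w_{\omega,K}$. The residual is unchanged and so is still orthogonal to $\mathcal V_K$. Since $1\in\mathcal V_K$, we also have $w_{\omega,K}-1\in\mathcal V_K$, and uniqueness follows as before.

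The only delicate point I anticipate is the integrability justification in the first paragraph, since the statement assumes only $\E\{\pi_\omega^{-1}\}<\infty$ and nonsingularity of $Q_{\omega,K}$. I would resolve it with the Cauchy--Schwarz bound above, noting that finiteness of $Q_{\omega,K}$ is implicit.
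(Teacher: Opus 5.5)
Your proposal is correct and follows essentially the same route as the paper: the tower property gives $\E\{\delta_\omega V_K(X)/\pi_\omega(X)\}=q_K$, so the normal equations reduce to $Q_{\omega,K}b=q_K$; nonsingularity then gives uniqueness; and the odds claim follows from $1\in\mathcal V_K$ (you argue via the unchanged residual, the paper via linearity of the projection). Your Cauchy--Schwarz bound $\E|\delta_\omega v_{kK}\pi_\omega^{-1}|\le\{\E(\delta_\omega v_{kK}^2)\}^{1/2}\{\E(\pi_\omega^{-1})\}^{1/2}$ is a worthwhile addition that the paper leaves implicit, since it is exactly where the hypothesis $\E\{\pi_\omega(X)^{-1}\}<\infty$ enters, and it also shows that $q_K$ is finite.
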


Proposition \ref{prop:inverse-ps-projection} follows from the argument surrounding the proof of Theorem 2 in \cite{wang2023distributed}. In light of the implicit covariate-balancing structure developed in Section~\ref{sec:implicit-weighting}, the proposition may also be viewed as an imputation analogue of related projection characterizations in balancing-weight arguments; see, e.g., \citet[Theorem~1(i)]{zhao2017entropy} and \citet[Theorem~1]{chattopadhyay2021implied}.

Proposition~\ref{prop:inverse-ps-projection} also parallels the key identity in \citet[Lemma~3.1]{lin2022regression}, building on the ideas in \cite{lin2021estimation}.  In their setting, regression-adjusted imputation based on linear smoothers admits an AIPW representation in which quantities induced by the smoothing weights play the role of the propensity-score, or equivalently density-ratio, component.  Thus, in both settings, an imputation procedure that explicitly models only the missing potential outcomes implicitly generates a weighting component ordinarily associated with propensity-score methods.  The underlying mechanisms are, however, different.  In \citet{lin2022regression}, the relevant weights arise from local or more general linear smoothers and enter through an AIPW representation, whereas here they arise directly from global least-squares sieve imputation, exactly balance the sieve space, and have population counterparts given by weighted $L^2$ projections of the inverse propensity scores.

Lastly, we note that the connections among regression, weighting, covariate balance, and robustness also have important precedents in the survey-sampling literature. For instance, \citet{deville1992calibration} showed that the generalized regression estimator admits an equivalent calibration-weighting interpretation: rather than motivating the estimator through a regression model, one may construct weights that remain close to the original design weights while satisfying calibration equations that reproduce known population totals of auxiliary variables.  This provides an early form of the regression-weighting duality underlying Proposition~\ref{prop:implicit-weighting}. A related robustness phenomenon appears in \citet{kott1994note} in the context of survey nonresponse.  Kott considers simultaneously a response model and a parametric model, and develops regression and imputation estimators that retain protection when one of these modeling frameworks is misspecified.  This may be viewed as an early precursor of double robustness.  In this sense, the results above connect the classical calibration perspective of \citet{deville1992calibration} with the two-model robustness perspective of \citet{kott1994note} through the geometry of least-squares projection.  The following subsections show how this structure leads to the product-bias and double-robustness results developed below.

\subsection{Regularity conditions and approximation errors}

We now turn to double robustness and semiparametric efficiency. The arguments below are adapted and simplified from
\citet[Theorem~2]{wang2023distributed}. We begin with the following conditions.

\begin{assumption}[Moments]\label{ass:moments}
There exists a constant $C_Y<\infty$ such that, for each
$\omega\in\{0,1\}$,
\[
\sup_{x\in\mathcal X}
\E\!\left[
\{Y(\omega)-m_\omega(X)\}^2
\mid X=x
\right]
\le C_Y
\]
and
\[
\E\{m_\omega(X)^2\}\le C_Y.
\]
\end{assumption}

\begin{assumption}[Sieve regularity]\label{ass:sieve}
Let $K=K_n$, possibly diverging with $n$, and define
\[
Q_K
:=
\E\{V_K(X)V_K(X)^\top\},
\qquad
\zeta_K
:=
\sup_{x\in\mathcal X}\|V_K(x)\|,
\]
The following conditions hold.
\begin{enumerate}
\item[(i)] The sieve contains the constant function, $1\in\mathcal V_K$, and, without loss of generality,
\[
v_{1K}(x)\equiv1,
\qquad
K=1,2,\ldots.
\]

\item[(ii)] There exist constants $0<c_Q<C_Q<\infty$, independent of $K$, such that
\[
c_Q
\le
\lambda_{\min}(Q_K)
\le
\lambda_{\max}(Q_K)
\le
C_Q.
\]

\item[(iii)] The sieve dimension and basis envelope satisfy
\[
\frac{K}{n}\to0,
\qquad
\frac{\zeta_K^2\log (2K)}{n}\to0.
\]
\end{enumerate}
\end{assumption}

\begin{remark}
Assumption \ref{ass:moments} corresponds to Assumption 3.1(iii) and (iv) in \citet{lin2022regression}. The conditions in Assumption~\ref{ass:sieve} are standard in the least-squares series literature; see, for example, \cite{newey1997convergence}, \cite{chen2007large}, and \cite{belloni2015some}.  Under standard normalizations, commonly used sieve bases have well-controlled envelopes.  For example, power-series bases may satisfy $\zeta_K=O(K)$, whereas Fourier series, splines, compactly supported wavelets, and piecewise-polynomial bases typically satisfy $\zeta_K=O(K^{1/2})$; see \cite{newey1997convergence} and \cite{chen2007large}.  The eigenvalue condition in Assumption~\ref{ass:sieve}(ii) is likewise standard; see, for example, \citet[Condition~A.2 and Proposition 2.1]{belloni2015some}.  
\end{remark}

For $\omega\in\{0,1\}$, recall the treatment-specific population Gram matrix
\[
Q_{\omega,K}
=
\E\{\delta_\omega V_K(X)V_K(X)^\top\}
=
\E\{\pi_\omega(X)V_K(X)V_K(X)^\top\}.
\]
Assumption~\ref{ass:unconf} implies
\[
c_eQ_K
\preceq
Q_{\omega,K}
\preceq
(1-c_e)Q_K,
\qquad
\omega\in\{0,1\},
\]
and hence Assumption~\ref{ass:sieve}(ii) yields
\[
c_ec_Q
\le
\lambda_{\min}(Q_{\omega,K})
\le
\lambda_{\max}(Q_{\omega,K})
\le
(1-c_e)C_Q.
\]

For $\omega\in\{0,1\}$, define the \emph{outcome-regression approximation error}
\begin{align}\label{eq:Delta-m}
\Delta_{m,\omega,K}
:=&
\inf_{a\in\mathcal V_K}
\left[
\E\left\{
\delta_\omega
\bigl(m_\omega(X)-a(X)\bigr)^2
\right\}
\right]^{1/2}\notag\\
=&
\left[
\E\left\{
\delta_\omega
\bigl(m_\omega(X)-m_{\omega,K}(X)\bigr)^2
\right\}
\right]^{1/2},
\end{align}
where the equality follows from the weighted $L^2$ projection characterization of $m_{\omega,K}$.  Analogously, define the \emph{propensity-odds approximation error}
\begin{align}\label{eq:Delta-rho-w}
\Delta_{\rho,\omega,K}
:=&
\inf_{a\in\mathcal V_K}
\left[
\E\left\{
\delta_\omega
\bigl(\rho_\omega(X)-a(X)\bigr)^2
\right\}
\right]^{1/2}\notag\\
=&
\left[
\E\left\{
\delta_\omega
\left(
\frac1{\pi_\omega(X)}
-
w_{\omega,K}(X)
\right)^2
\right\}
\right]^{1/2},
\end{align}
where the second equality follows from
Proposition~\ref{prop:inverse-ps-projection} and the fact that
$1\in\mathcal V_K$.

\subsection{Implicit double robustness}

\citet[Section~3]{robins2007comment} showed that linear regression estimators can exhibit a form of double robustness.  \citet[Section~4.1]{imbens2005mean} derived a closely related residual-product representation for the bias, while the proof of Theorem~2 of \citet{wang2023distributed} makes the corresponding product-of-approximation-errors structure explicit. We formulate this phenomenon here for estimation of the ATE and, through Propositions~\ref{prop:implicit-weighting}--\ref{prop:inverse-ps-projection}, make the role of the implicit weighting functions $w_{\omega,K}(\cdot)$ more explicit.

\begin{theorem}[Product-bias representation and bound]
\label{thm:implicit-dr}
Suppose Assumptions~\ref{ass:dgp}, \ref{ass:unconf}, \ref{ass:moments}, and \ref{ass:sieve}(i)-(ii) hold.  Then
\begin{equation}\label{eq:dr-bias-compact}
\tau_K-\tau
=
B_{1,K}-B_{0,K},
\end{equation}
where, for each $\omega\in\{0,1\}$,
\begin{align}
B_{\omega,K}
&:=
\E\{m_{\omega,K}(X)-m_\omega(X)\}
\notag\\
&=
\E\left[
\delta_\omega
\left\{
\frac{1}{\pi_\omega(X)}
-
w_{\omega,K}(X)
\right\}
\{m_{\omega,K}(X)-m_\omega(X)\}
\right].
\label{eq:dr-bias-arm}
\end{align}
Consequently,
\begin{equation}\label{eq:product-bound}
|\tau_K-\tau|
\le
\Delta_{\rho,1,K}\Delta_{m,1,K}
+
\Delta_{\rho,0,K}\Delta_{m,0,K},
\end{equation}
where the approximation errors $\{\Delta_{\rho,\omega,K},\Delta_{m,\omega,K}:\omega\in\{0,1\}\}$ were introduced in \eqref{eq:Delta-m} and \eqref{eq:Delta-rho-w}.
\end{theorem}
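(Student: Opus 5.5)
First I would reduce $\tau_K-\tau$ to arm-specific biases. By unconfoundedness, $\E\{\delta_\omega Y\}=\E\{\delta_\omega m_\omega(X)\}$. Also $\E[(1-\delta_\omega)a(X)]=\E[(1-\pi_\omega(X))a(X)]$ for any integrable $a$. Hence the $\omega$-arm component of $\tau_K$ equals
\[
\E\{\pi_\omega(X) m_\omega(X)+(1-\pi_\omega(X))m_{\omega,K}(X)\}.
\]
Subtracting $\mu_\omega=\E\{m_\omega(X)\}$ gives
\[
\E\{(1-\pi_\omega(X))(m_{\omega,K}(X)-m_\omega(X))\}.
\]
This is the complement-arm bias. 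To match the displayed $B_{\omega,K}=\E\{m_{\omega,K}-m_\omega\}$, I would use the normal equations for $\beta^*_{\omega,K}$. Under unconfoundedness they read $\E[\delta_\omega V_K(X)\{m_\omega(X)-m_{\omega,K}(X)\}]=0$, and since $v_{1K}\equiv1$ they give $\E\{\pi_\omega(X)(m_{\omega,K}-m_\omega)(X)\}=0$. So the two expressions agree, and summing over arms yields \eqref{eq:dr-bias-compact}. Integrability of all terms follows from Assumption~\ref{ass:moments} and the finite second moments of $V_K$ implied by Assumption~\ref{ass:sieve}(ii).

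Next, for the representation \eqref{eq:dr-bias-arm}, I would write
\[
\E\{m_{\omega,K}-m_\omega\}=\E\bigl[\delta_\omega \pi_\omega(X)^{-1}(m_{\omega,K}-m_\omega)(X)\bigr].
\]
This is legitimate by conditioning on $X$ and using overlap. I would then subtract
\[
\E[\delta_\omega w_{\omega,K}(X)(m_{\omega,K}-m_\omega)(X)],
\]
which is zero by the same normal equations, because $w_{\omega,K}\in\mathcal V_K$ is a linear combination of the components of $V_K$. The result is exactly \eqref{eq:dr-bias-arm}.

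Finally, I would apply Cauchy--Schwarz under the inner product $\langle\cdot,\cdot\rangle_\omega$ to get
\[
|B_{\omega,K}|\le \bigl[\E\{\delta_\omega(\pi_\omega^{-1}-w_{\omega,K})^2\}\bigr]^{1/2}\bigl[\E\{\delta_\omega(m_{\omega,K}-m_\omega)^2\}\bigr]^{1/2}.
\]
The two factors are $\Delta_{\rho,\omega,K}$ and $\Delta_{m,\omega,K}$ by \eqref{eq:Delta-rho-w}, which rests on Proposition~\ref{prop:inverse-ps-projection}, and by \eqref{eq:Delta-m}. The triangle inequality then gives \eqref{eq:product-bound}.

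The only delicate point is the first step. One must be careful that the parametrization of the $\omega$-arm bias as $\E\{m_{\omega,K}-m_\omega\}$ is consistent with the imputation functional, where the imputation occurs on the complementary arm. The constant-in-sieve orthogonality is what reconciles the two. Everything else is projection algebra.
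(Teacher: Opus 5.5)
Your proposal is correct and follows the paper's proof essentially step for step. The intercept normal equation converts the complement-arm bias $\E\{(1-\pi_\omega)(m_{\omega,K}-m_\omega)\}$ into $B_{\omega,K}$, inverse-propensity reweighting together with the orthogonality $\E\{\delta_\omega w_{\omega,K}(m_{\omega,K}-m_\omega)\}=0$ gives the product representation, and Cauchy--Schwarz with Proposition~\ref{prop:inverse-ps-projection} yields the bound; your explicit integrability check is a small addition that the paper leaves implicit.
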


Theorem~\ref{thm:implicit-dr} shows that the approximation bias is second order in the sense that it is controlled by products of the outcome-regression and propensity-odds approximation errors.  This immediately yields the following double-robustness result for the sample estimator.

\begin{corollary}[Implicit double robustness]
\label{cor:implicit-dr}
Suppose Assumptions~\ref{ass:dgp}-\ref{ass:sieve} hold.  Then
\begin{equation}\label{eq:sample-pop}
\widehat\tau_{\mathrm{imp}}-\tau_K
=
o_\P(1).
\end{equation}
Hence, if $\Delta_{\rho,1,K}\Delta_{m,1,K} +
\Delta_{\rho,0,K}\Delta_{m,0,K}
=
o(1)$,
then
\[
\widehat\tau_{\mathrm{imp}}-\tau=o_\P(1).
\]
Moreover, for any fixed $K$, the corresponding population bias vanishes arm by arm whenever
\[
m_\omega\in\mathcal V_K
\qquad\text{or}\qquad
\rho_\omega\in\mathcal V_K,
\qquad \omega\in\{0,1\}.
\]
Consequently, $\tau_K=\tau$ if either condition holds for each treatment arm, and the two arms may rely on different sides of this condition.
\end{corollary}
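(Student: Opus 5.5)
The corollary has three claims: (a) the sample-to-population statement \eqref{eq:sample-pop}; (b) the consequence $\widehat\tau_{\mathrm{imp}}-\tau=o_\P(1)$; (c) the fixed-$K$ arm-by-arm statement. Parts (b) and (c) follow from Theorem~\ref{thm:implicit-dr}, so the work is in (a). For (b), write $\widehat\tau_{\mathrm{imp}}-\tau=(\widehat\tau_{\mathrm{imp}}-\tau_K)+(\tau_K-\tau)$. Then combine (a) with the product bound \eqref{eq:product-bound}.

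For (c), fix $\omega$ and use representation \eqref{eq:dr-bias-arm}. If $m_\omega\in\mathcal V_K$, then the weighted projection reproduces it, so $m_{\omega,K}=m_\omega$ and $B_{\omega,K}=0$. If $\rho_\omega\in\mathcal V_K$, then $\pi_\omega^{-1}=1+\rho_\omega\in\mathcal V_K$ because $1\in\mathcal V_K$. By Proposition~\ref{prop:inverse-ps-projection}, $w_{\omega,K}=\pi_\omega^{-1}$ $\delta_\omega$-a.s., so the integrand vanishes and $B_{\omega,K}=0$. Applying this separately to each arm and using \eqref{eq:dr-bias-compact} gives $\tau_K=\tau$, with each arm free to use either condition.

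For (a), it suffices to show $\widehat\mu_\omega-\mu_{\omega,K}=o_\P(1)$ for each $\omega$, where $\mu_{\omega,K}:=\E[\delta_\omega Y+(1-\delta_\omega)m_{\omega,K}(X)]$. I would decompose
\[
\widehat\mu_\omega-\mu_{\omega,K}
=(\PP_n-\P)\{\delta_\omega Y+(1-\delta_\omega)m_{\omega,K}(X)\}
+\PP_n\{(1-\delta_\omega)V_K(X)\}^\top(\widehat\beta_\omega-\beta^*_{\omega,K}).
\]

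\textbf{First term.} It has variance bounded by $n^{-1}\E\{Y^2+m_{\omega,K}(X)^2\}$. This is $O(1/n)$ because Assumption~\ref{ass:moments} gives $\E Y(\omega)^2\le 2C_Y$. Also, $\E m_{\omega,K}^2\le (c_ec_Q)^{-1}\lambda_{\max}(Q_K)\,\E\{\delta_\omega m_{\omega,K}^2\}\lesssim \E\{\delta_\omega m_\omega^2\}$, since a projection does not increase the norm. Chebyshev's inequality then makes this term $o_\P(1)$.

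\textbf{Second term.} Bound it by $\|\PP_n(1-\delta_\omega)V_K\|\cdot\|\widehat\beta_\omega-\beta^*_{\omega,K}\|$. I expect this to be the main obstacle, because the dimension $K$ may diverge, and it needs a rate.

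\emph{Step 1 (Gram matrix).} Use a matrix Bernstein (Rudelson-type) inequality with the envelope $\zeta_K$. By Assumption~\ref{ass:sieve}(iii), this gives $\|\widehat Q_{\omega,K}-Q_{\omega,K}\|_{\op}=O_\P(\sqrt{\zeta_K^2\log(2K)/n})=o_\P(1)$. Hence $\widehat Q_{\omega,K}$ is invertible with probability tending to one and $\|\widehat Q_{\omega,K}^{-1}\|_{\op}=O_\P(1)$.

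\emph{Step 2 (coefficient error).} Write $\widehat\beta_\omega-\beta^*_{\omega,K}=\widehat Q_{\omega,K}^{-1}\PP_n\{\delta_\omega V_K(X)(Y-m_{\omega,K}(X))\}$. The population normal equations give this score mean zero. Its second moment is at most $n^{-1}\E\{\delta_\omega\|V_K\|^2(Y-m_{\omega,K})^2\}$. Splitting $Y-m_{\omega,K}=(Y-m_\omega)+(m_\omega-m_{\omega,K})$, the first piece contributes $\le C_Y\,\mathrm{tr}(Q_K)/n\lesssim K/n$. The second contributes $\le\zeta_K^2\Delta_{m,\omega,K}^2/n$. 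Here $\Delta_{m,\omega,K}$ is bounded, because $\Delta_{m,\omega,K}^2\le\E m_\omega^2\le C_Y$ by taking $a=0$. Thus $\|\widehat\beta_\omega-\beta^*_{\omega,K}\|=O_\P(\sqrt{K/n}+\zeta_K/\sqrt n)=o_\P(1)$.

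\emph{Step 3 (companion factor).} The vector $\PP_n(1-\delta_\omega)V_K$ has norm $O_\P(1)$, since its mean has norm $\le\|q_K\|$ and its fluctuation is $O_\P(\sqrt{K/n})$. Here $\|q_K\|$ is bounded because $q_K=Q_Ke_1$ when $v_{1K}\equiv1$, so $\|q_K\|\le C_Q$.

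Multiplying the bounds from Steps 2 and 3 shows the second term is $o_\P(1)$, which completes (a).
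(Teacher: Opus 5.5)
Your proposal is correct and follows essentially the paper's route. It uses the score $S_{\omega,K}$ with $\widehat\beta_\omega-\beta^*_{\omega,K}=\widehat Q_{\omega,K}^{-1}S_{\omega,K}$, Gram-matrix concentration to control $\|\widehat Q_{\omega,K}^{-1}\|_{\op}$, an $O_\P(1)$ companion vector, and Chebyshev for the remaining empirical average. The differences are minor. You skip the sample intercept identity $\widehat\mu_\omega=\overline V_K^\top\widehat\beta_\omega$, so you pair the coefficient error with $\PP_n\{(1-\delta_\omega)V_K(X)\}$ rather than $\overline V_K$. You also spell out the fixed-$K$ arm-by-arm claim, which the paper leaves as an immediate consequence of Theorem~\ref{thm:implicit-dr}.

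One small repair is needed in Step~3. The bound $\|\E\{(1-\delta_\omega)V_K(X)\}\|\le\|q_K\|$ need not hold, because reweighting by $1-\pi_\omega$ can enlarge the non-intercept components. However, since $v_{1K}\equiv1$, you have $\E\{(1-\delta_\omega)V_K(X)\}=Q_{1-\omega,K}e_1$. Its norm is at most $\lambda_{\max}(Q_{1-\omega,K})\le(1-c_e)C_Q$, which is all you need.
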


\begin{remark}
    Our central theme might sound at odds with that in \citet{robins1997toward}. They studied a related survey sampling setting with known survey weights (that is, the inverse of propensity scores) but arbitrarily complicated outcome model. Under their model, the approximation error $m_{1,K}(X)-m_1(X)$ can be arbitrarily large, so they concluded that we must use estimators based on survey weights. Since the survey weights are essentially covariates, a canonical approach is to use them directly in the outcome model to ensure the approximation error $ {\pi_1(X)}^{-1} - w_{1,K}(X)$ is small. In particular, we can add the known $1/\pi_1(X)$ in the sieve to ensure that $ {\pi_1(X)}^{-1} - w_{1,K}(X) = 0$; see \citet{rubin1985use} and \citet{little2004robust} for related methods. 
\end{remark}

\subsection{Semiparametric efficiency}\label{sec:efficiency}

Double robustness concerns the bias of the population imputation functional.  Semiparametric efficiency requires, in addition, that the first-order stochastic component induced by the implicit weights converge to the EIF.  We therefore impose the following stronger conditions.

\begin{assumption}[Conditions for efficiency]\label{ass:efficiency}
The following conditions hold.
\begin{enumerate}
\item[(i)] The sieve complexity satisfies
\begin{equation}\label{eq:eff-complexity}
\frac{\zeta_K^2K\log (2K)}{n}\to0.
\end{equation}

\item[(ii)] For each $\omega\in\{0,1\}$,
\[
\Delta_{m,\omega,K}\to0,
\qquad
\Delta_{\rho,\omega,K}\to0,
\]
and
\begin{equation}\label{eq:eff-product-rate}
\sqrt n
\left\{
\Delta_{\rho,1,K}\Delta_{m,1,K}
+
\Delta_{\rho,0,K}\Delta_{m,0,K}
\right\}
\to0.
\end{equation}

\item[(iii)] The population implicit weights are uniformly bounded: there exists a constant $C_w<\infty$ such that
\[
\sup_{K\ge1}
\max_{\omega\in\{0,1\}}
\sup_{x\in\mathcal X}
|w_{\omega,K}(x)|
\le C_w.
\]
\end{enumerate}
\end{assumption}

Condition~\eqref{eq:eff-product-rate} is the familiar product-rate condition.  Neither nuisance component needs to be approximated at the parametric rate; rather, the product of their approximation errors must be $o(n^{-1/2})$.  Condition~\eqref{eq:eff-complexity} is a standard strengthening of the sieve-growth condition needed for root-$n$ inference.  For example, when $\zeta_K\lesssim\sqrt K$, it is implied by
\[
\frac{K^2\log (2K)}{n}\to0.
\]

Following \cite{hahn1998role}, define the EIF for the ATE as
\begin{equation}\label{eq:eif}
\psi(Z)
:=
m_1(X)-m_0(X)-\tau
+
\frac{W}{e(X)}\{Y-m_1(X)\}
-
\frac{1-W}{1-e(X)}\{Y-m_0(X)\}.
\end{equation}
Its variance, which gives the semiparametric efficiency bound, is
\begin{equation}\label{eq:eff-bound}
V_{\mathrm{eff}}
:=
\E\left[
\frac{\sigma_1^2(X)}{e(X)}
+
\frac{\sigma_0^2(X)}{1-e(X)}
+
\{m_1(X)-m_0(X)-\tau\}^2
\right],
\end{equation}
where
\[
\sigma_\omega^2(x)
:=
\Var\{Y(\omega)\mid X=x\},
\qquad
\omega\in\{0,1\}.
\]

The following efficiency result is then natural in light of the covariate-balancing structure uncovered in Section~\ref{sec:implicit-weighting} and the balancing-weight argument underlying \citet[Theorem~1]{chan2016globally}.

\begin{theorem}[Semiparametric efficiency]\label{thm:efficiency}
Suppose Assumptions~\ref{ass:dgp}-\ref{ass:efficiency} hold.  Then
\begin{equation}\label{eq:AL}
\widehat\tau_{\mathrm{imp}}-\tau
=
\frac1n\sum_{i=1}^n\psi(Z_i)
+
o_\P(n^{-1/2}).
\end{equation}
Therefore, 
\[
\sqrt n
\left(
\widehat\tau_{\mathrm{imp}}-\tau
\right)
\xrightarrow{d}
N(0,V_{\mathrm{eff}}),
\]
and hence $\widehat\tau_{\mathrm{imp}}$ attains the semiparametric efficiency bound for the ATE.
\end{theorem}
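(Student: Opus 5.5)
We work arm by arm and show that, for each $\omega\in\{0,1\}$,
\[
\widehat\mu_\omega-\mu_\omega
=
\PP_n\Bigl[m_\omega(X)-\mu_\omega+\frac{\delta_\omega}{\pi_\omega(X)}\{Y-m_\omega(X)\}\Bigr]
+o_\P(n^{-1/2}).
\]
Taking the difference of the two arms then gives \eqref{eq:AL}. Asymptotic normality follows from the Lindeberg--L\'evy CLT, since $\E\psi^2=V_{\mathrm{eff}}<\infty$ by Assumptions~\ref{ass:unconf} and~\ref{ass:moments}.

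\textbf{Decomposition of $\widehat\mu_\omega-\mu_\omega$.} Write $\varepsilon:=Y-m_\omega(X)$ on $\{\delta_\omega=1\}$. By Proposition~\ref{prop:implicit-weighting},
\[
\widehat\mu_\omega=\PP_n\{\delta_\omega\widehat w_{\omega,K}(X)Y\}.
\]
Split $Y=\varepsilon+\{m_\omega-m_{\omega,K}\}(X)+m_{\omega,K}(X)$. Exact sample balance gives $\PP_n\{\delta_\omega\widehat w_{\omega,K}m_{\omega,K}\}=\PP_n m_{\omega,K}$, because $m_{\omega,K}\in\mathcal V_K$. Therefore
\[
\widehat\mu_\omega
=\PP_n m_\omega
+\PP_n\{\delta_\omega\widehat w_{\omega,K}\varepsilon\}
+\PP_n\bigl[(\delta_\omega\widehat w_{\omega,K}-1)(m_\omega-m_{\omega,K})\bigr].
\]
The remaining work is to show:
\begin{enumerate}[label=(\alph*)]
\item $\PP_n\{\delta_\omega(\widehat w_{\omega,K}-\pi_\omega^{-1})\varepsilon\}=o_\P(n^{-1/2})$;
\item $\PP_n[(\delta_\omega\widehat w_{\omega,K}-1)(m_\omega-m_{\omega,K})]=o_\P(n^{-1/2})$.
\end{enumerate}
For (b), first replace $\widehat w_{\omega,K}$ by $w_{\omega,K}$. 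The population mean of $(\delta_\omega w_{\omega,K}-1)(m_\omega-m_{\omega,K})$ equals $-B_{\omega,K}$ by \eqref{eq:dr-bias-arm}. Its absolute value is at most $\Delta_{\rho,\omega,K}\Delta_{m,\omega,K}=o(n^{-1/2})$ by \eqref{eq:eff-product-rate}. The centered empirical part has variance at most
\[
n^{-1}(C_w+1)^2c_e^{-1}\Delta_{m,\omega,K}^2\to0 \quad\text{(scaled by $n$)},
\]
using Assumption~\ref{ass:efficiency}(iii) and $\E(m_\omega-m_{\omega,K})^2\le c_e^{-1}\Delta_{m,\omega,K}^2$. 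The swap term for (a) is handled the same way: $\PP_n\{\delta_\omega(w_{\omega,K}-\pi_\omega^{-1})\varepsilon\}$ has mean zero by unconfoundedness, and its variance is $n^{-1}C_Y\Delta^2_{\rho,\omega,K}=o(n^{-1})$.

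\textbf{Controlling $\widehat w_{\omega,K}-w_{\omega,K}$ (the main obstacle).} Write
\[
\widehat w_{\omega,K}-w_{\omega,K}=V_K^\top(\widehat Q_{\omega,K}^{-1}\overline V_K-Q_{\omega,K}^{-1}q_K)=:V_K^\top\widehat\gamma.
\]
A matrix Bernstein (Rudelson) inequality under Assumption~\ref{ass:sieve}(iii) gives
\[
\|\widehat Q_{\omega,K}-Q_{\omega,K}\|_{\op}=O_\P\bigl(\sqrt{\zeta_K^2\log(2K)/n}\bigr)=o_\P(1),
\]
so $\widehat Q_{\omega,K}^{-1}$ exists with bounded norm with probability tending to one. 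We also have $\|\overline V_K-q_K\|=O_\P(\sqrt{K/n})$. Hence
\[
\|\widehat\gamma\|=O_\P\bigl(\sqrt{K/n}+\zeta_K\sqrt{K\log(2K)}/n^{1/2}\bigr),
\]
where $\|Q_{\omega,K}^{-1}q_K\|$ is bounded because $w_{\omega,K}$ is bounded in $L^2$ and $Q_K$ is well conditioned. Then
\[
\PP_n\{\delta_\omega V_K^\top\widehat\gamma\,\varepsilon\}=\widehat\gamma^\top\PP_n\{\delta_\omega V_K\varepsilon\}.
\]
Here $\|\PP_n\{\delta_\omega V_K\varepsilon\}\|=O_\P(\sqrt{K/n})$, since $\varepsilon$ has conditional mean zero and bounded conditional variance. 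The product is
\[
O_\P\bigl(K/n+\zeta_K K\sqrt{\log(2K)}/n\bigr)=o_\P(n^{-1/2})
\]
by \eqref{eq:eff-complexity}. For (b), the empirical balance term $\PP_n\{\delta_\omega V_K^\top\widehat\gamma(m_\omega-m_{\omega,K})\}$ is bounded by $\|\widehat\gamma\|\cdot\|\PP_n\{\delta_\omega V_K(m_\omega-m_{\omega,K})\}\|$. The population mean of $\delta_\omega V_K(m_\omega-m_{\omega,K})$ vanishes by the normal equations \eqref{eq:beta-star}, so the second factor is $O_\P(\sqrt{K/n}\,\zeta_K\Delta_{m,\omega,K})$-ish, or more crudely $O_\P(\zeta_K\Delta_{m,\omega,K}/\sqrt n)$. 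Combining with $\|\widehat\gamma\|$ gives $o_\P(n^{-1/2})$, using $\Delta_{m,\omega,K}\to0$ and \eqref{eq:eff-complexity}.

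I expect this step to be the delicate one: the rate bookkeeping for $\widehat\gamma$ and the cross terms must fit exactly within $\zeta_K^2K\log(2K)/n\to0$. If the crude bound on the balance term is too loose, the first thing to try is conditioning on $\{X_i,W_i\}$ and bounding second moments directly.
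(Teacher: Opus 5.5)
\textbf{Where your proof matches the paper.} Your decomposition is the paper's proof in a different order. Write $\widehat w_{\omega,K}=w_{\omega,K}+V_K^\top\widehat\gamma$. Then your three terms sum to $\widehat\mu_\omega=\PP_n g_{\omega,K}+\widehat\gamma^\top S_{\omega,K}$, where $S_{\omega,K}=\PP_n[\delta_\omega V_K(X)\{Y-m_{\omega,K}(X)\}]$ and $\widehat\gamma$ is exactly the paper's $D_{\omega,K}$. Your two ``swap'' terms are exactly the two pieces of $g_{\omega,K}-g_\omega$ that the paper controls. One small sign slip: the mean of $(\delta_\omega w_{\omega,K}-1)(m_\omega-m_{\omega,K})$ is $+B_{\omega,K}$, not $-B_{\omega,K}$, which is harmless since only $|B_{\omega,K}|$ enters. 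All of this is fine.

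\textbf{The gap: the rate for $\|\widehat\gamma\|$.} It is in the step you flagged. Bounding $(\widehat Q_{\omega,K}-Q_{\omega,K})Q_{\omega,K}^{-1}q_K$ through the operator norm gives at best
\[
\|\widehat\gamma\|=O_\P\bigl(\sqrt{K/n}+\zeta_K\sqrt{\log(2K)/n}\bigr).
\]
Your display carries a spurious extra $\sqrt K$. With that factor even (a) fails: $\sqrt n\cdot\zeta_K K\sqrt{\log(2K)}/n=\sqrt K\,\{\zeta_K^2K\log(2K)/n\}^{1/2}$, which need not vanish under (i).

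With the corrected rate, (a) closes. The balance term in (b), however, becomes $\sqrt n\,\|\widehat\gamma\|\,\zeta_K\Delta_{m,\omega,K}/\sqrt n$, and its second piece is $\zeta_K^2\sqrt{\log(2K)/n}\,\Delta_{m,\omega,K}$. This vanishes when $\zeta_K\lesssim\sqrt K$, but not in general. For power series with $\zeta_K\asymp K$ and $K\asymp n^{0.3}$, condition (i) holds while $\zeta_K^2\sqrt{\log(2K)/n}\to\infty$. Assumption~\ref{ass:efficiency} gives no rate for $\Delta_{m,\omega,K}$ on its own. Your fallback of conditioning on $\{X_i,W_i\}$ cannot help, because both factors of $\widehat\gamma^\top\PP_n\{\delta_\omega V_K(m_\omega-m_{\omega,K})\}$ are functions of $\{X_i,W_i\}$.

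\textbf{The missing idea.} Use the sup-norm bound in Assumption~\ref{ass:efficiency}(iii), not just $\|Q_{\omega,K}^{-1}q_K\|=O(1)$. Write
\[
\widehat\gamma=\widehat Q_{\omega,K}^{-1}\bigl\{(\overline V_K-q_K)-(\widehat Q_{\omega,K}-Q_{\omega,K})\gamma_{\omega,K}\bigr\},\qquad \gamma_{\omega,K}=Q_{\omega,K}^{-1}q_K.
\]
Then observe that $(\widehat Q_{\omega,K}-Q_{\omega,K})\gamma_{\omega,K}=(\PP_n-\P)\{\delta_\omega w_{\omega,K}(X)V_K(X)\}$. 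This is a centered average of vectors with $\E\{\delta_\omega w_{\omega,K}^2\|V_K\|^2\}\le C_w^2\operatorname{tr}(Q_{\omega,K})\lesssim K$. Hence $\|\widehat\gamma\|=O_\P(\sqrt{K/n})$, with no $\zeta_K$ factor. The operator-norm concentration is then needed only to get $\|\widehat Q_{\omega,K}^{-1}\|_{\op}=O_\P(1)$.

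With this rate, $\sqrt n$ times the remainder in (a) is $O_\P(K/\sqrt n)$, and $\sqrt n$ times the remainder in (b) is $O_\P(\zeta_K\sqrt{K/n}\,\Delta_{m,\omega,K})$. Both are $o_\P(1)$, because $\zeta_K^2\ge c_QK$ together with (i) gives $K/\sqrt n\to0$ and $\zeta_K\sqrt{K/n}\to0$. This is precisely how the paper obtains $\|D_{\omega,K}\|=O_\P(\sqrt{K/n})$.
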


The efficiency conclusion in Theorem~\ref{thm:efficiency} should also be viewed in light of \citet[Theorem 2]{wang2023distributed}; see, also, \citet[Theorem 3.1]{imbens2005mean}. Our contribution is therefore not the efficiency result in isolation, but its structural interpretation in the causal setting.  The least-squares normal equations generate exact finite-sample balancing weights whose population counterparts are weighted $L^2$ projections of the inverse propensity scores.  Together with Theorem~\ref{thm:implicit-dr}, this shows that the product-bias phenomenon discussed in
\citet{imbens2005mean} and \citet{wang2023distributed}, exact covariate balance, inverse-propensity projection, double robustness, and semiparametric efficiency all arise from the same least-squares geometry.

The proof of Theorem~\ref{thm:efficiency} also differs in organization from that of Theorem~2 in \cite{wang2023distributed}.  Their proof decomposes the centralized sieve-imputation estimator into several estimation, approximation, and empirical-process terms, and additionally controls the error from the distributed iterative algorithm.  Our proof instead starts from the implicit weighting representation.  For each treatment arm, define
\[
g_{\omega,K}(Z)
=
m_{\omega,K}(X)
+
\delta_\omega w_{\omega,K}(X)
\{Y-m_{\omega,K}(X)\}.
\]
We show directly that
$\widehat\mu_\omega=\PP_n g_{\omega,K}+o_\P(n^{-1/2})$; its population bias is controlled by Theorem~\ref{thm:implicit-dr}, while $g_{\omega,K}$ converges in $L^2$ to the corresponding efficient influence-function component as $m_{\omega,K}$ and $w_{\omega,K}$ approximate $m_\omega$ and $1/\pi_\omega$.  Thus, although the key projection insight is shared with \cite{wang2023distributed},  the present proof is organized directly around the implicit weighting structure in Propositions~\ref{prop:implicit-weighting} and~\ref{prop:inverse-ps-projection}.

\section{Discussion}\label{sec:discussion}

Least-squares sieve regression is a classical tool of nonparametric estimation, and its limitations are perhaps equally classical.  For generic smooth functions of many covariates, the dimension of the approximating space must grow rapidly with the covariate dimension, leading to the familiar curse of dimensionality; see, e.g., \citet[Section~4.5]{wasserman2006all}.  This limitation has contributed to the increasing use of more adaptive regression architectures, most notably deep neural networks, whose approximation properties can exploit structural features such as sparsity, compositionality, and low-dimensional representations \citep{schmidt2020nonparametric}.

We conclude this paper by offering a broader perspective.  In particular, when trained under squared-error loss, neural-network regression remains, at its core, a least-squares regression procedure.  The same observation applies to transformer-based architectures when they are used as regression learners with a squared-error objective.  Thus, the distinction between classical series regression and modern machine learning is not necessarily one between least squares and something fundamentally different, but often one between a fixed, explicitly specified feature space and a rich feature representation learned from data.

This connection becomes especially transparent in the neural tangent kernel (NTK) regime.  For sufficiently wide neural networks, gradient-based training can, after linearization around initialization, be described by kernel gradient descent with the NTK; under squared loss, the resulting function-space dynamics correspond to kernel least-squares regression \citep{jacot2018neural}.  Related kernel limits have also been developed for attention-based architectures \citep{hron2020infinite}.  Kernel regression, in turn, is closely connected to sieve regression.  If a positive-definite kernel admits the expansion
\[
\mathcal K(x,x')
=
\sum_{j\geq1}\lambda_j\phi_j(x)\phi_j(x'),
\]
then truncating this expansion yields the finite-dimensional feature map
\[
V_K(x)
=
\bigl(
\sqrt{\lambda_1}\phi_1(x),\ldots,
\sqrt{\lambda_K}\phi_K(x)
\bigr)^\top,
\]
so that least-squares regression on these features is precisely a sieve regression.  Alternatively, random-feature approximations replace the kernel by a finite random expansion and again reduce the problem, under squared loss, to linear regression on a growing collection of features \citep{rahimi2007random}.  In this sense, classical sieves and modern kernel or wide-network methods can be viewed as different realizations of the same basic principle: construct a rich feature space and perform least-squares projection within that space.

This perspective gives a broader interpretation to the results of this paper.  Our analysis shows that least-squares fitting does more than estimate an outcome regression.  Its normal equations simultaneously generate balancing weights whose population counterparts approximate inverse propensity scores, while double robustness and efficiency emerge from the interaction of these two projections.  The message is therefore not limited to polynomial, spline, or other classical sieve bases.  Rather, it suggests that sufficiently rich modern least-squares regression methods may themselves contain much of the weighting structure that is typically introduced separately in causal inference.

A further practical appeal of the imputation perspective is that it provides a particularly direct interface with modern machine-learning methods.  Outcome imputation is itself a standard regression problem, so flexible regression architectures can be incorporated simply by replacing the outcome-regression learner.  By comparison, incorporating similarly rich function classes into balancing-weight procedures typically requires specifying or solving an additional balancing, dual, or representer-estimation problem.  This distinction does not preclude the use of modern machine learning for balancing weights, but it makes imputation a particularly natural plug-in point for off-the-shelf regression methods.

This interpretation should nevertheless be made with some care.  The exact results established in this paper rely on a linear sieve and its exact least-squares normal equations.  A finite-width neural network or a general transformer need not admit the same representation outside regimes, such as the NTK regime, in which an approximately fixed feature structure emerges.  Regularization, early stopping, and imperfect numerical optimization may also perturb the relevant normal equations.  Subject to these qualifications, the paper supports a simple view of causal estimation: \emph{imputation, together with a flexible outcome regression fitted by least squares, may be all you need.}  Understanding when this principle extends from classical sieves to deep neural networks and transformer-based regression would, in our view, provide a natural direction for future research.

\section{Proofs}\label{sec:proofs}

Throughout the proofs, for any two sequences $a_n$ and $b_n$, we write
\[
a_n\lesssim b_n
\]
if there exists a universal constant $C<\infty$ such that $|a_n|\le C|b_n|$ for all sufficiently large $n$. Let $e_1=(1,0,\ldots,0)^\top\in\mathbb R^K$.  Because $v_{1K}\equiv1$, $q_K$ in \eqref{eq:Q-q-w} satisfies
\begin{equation}\label{eq:q-Q-e1}
q_K
=
\E\{V_K(X)\}
=
Q_Ke_1,
\end{equation}
and hence Assumption~\ref{ass:sieve}(ii) implies $\|q_K\|=O(1)$.

In the following, we will repeatedly use the following standard concentration consequence of Assumption~\ref{ass:sieve}.  For each $\omega\in\{0,1\}$,
\begin{equation}\label{eq:gram-concentration}
\|\widehat Q_{\omega,K}-Q_{\omega,K}\|_{\op}
=
O_\P\left(
\zeta_K\sqrt{\frac{\log (2K)}{n}}
+
\frac{\zeta_K^2\log (2K)}{n}
\right)
=
o_\P(1),
\end{equation}
where $\|\cdot\|_{\op}$ stands for the matrix spectral norm. Equation \eqref{eq:gram-concentration} is from \citet[Lemma 2.2]{chen2018optimal}; see, also, \citet[Theorem 4.6]{belloni2015some} and \citet[Proof of Theorem 4.1]{cattaneo2025rosenbaum}. To see \eqref{eq:gram-concentration}, we observe $\|\delta_\omega V_KV_K^\top\|_{\op}\le\zeta_K^2$, while
\[
(\delta_\omega V_KV_K^\top)^2
=
\delta_\omega\|V_K\|^2V_KV_K^\top
\preceq
\zeta_K^2\delta_\omega V_KV_K^\top,
\]
so \eqref{eq:gram-concentration} follows from matrix Bernstein's inequality and the uniform eigenvalue bound for $Q_{\omega,K}$.  Equation \eqref{eq:gram-concentration} then implies that $\widehat Q_{\omega,K}$ is nonsingular with probability tending to one and
\begin{equation}\label{eq:inverse-bound}
\|\widehat Q_{\omega,K}^{-1}\|_{\op}=O_\P(1).
\end{equation}

\subsection{Proof of Proposition~\ref{prop:implicit-weighting}}

\begin{proof}
Because $v_{1K}\equiv1$, the intercept component of the least-squares normal equations associated with \eqref{eq:beta-hat} gives
\[
\PP_n\left[
\delta_\omega\{Y-\widehat m_\omega(X)\}
\right]
=0.
\]
Therefore,
\begin{align*}
\widehat\mu_\omega
&=
\PP_n\left[
\delta_\omega Y+(1-\delta_\omega)\widehat m_\omega(X)
\right]=
\PP_n\widehat m_\omega(X)
=
\overline V_K^\top\widehat\beta_\omega.
\end{align*}
The full normal equations yield
\[
\widehat Q_{\omega,K}\widehat\beta_\omega
=
\PP_n\{\delta_\omega V_K(X)Y\}.
\]
Whenever $\widehat Q_{\omega,K}$ is nonsingular, we then have
\begin{align*}
\widehat\mu_\omega
&=
\overline V_K^\top
\widehat Q_{\omega,K}^{-1}
\PP_n\{\delta_\omega V_K(X)Y\}=
\PP_n\{\delta_\omega\widehat w_{\omega,K}(X)Y\},
\end{align*}

Moreover,
\begin{align*}
\PP_n\{\delta_\omega\widehat w_{\omega,K}(X)V_K(X)\}
&=
\PP_n\{\delta_\omega V_K(X)V_K(X)^\top\}
\widehat Q_{\omega,K}^{-1}\overline V_K=
\overline V_K,
\end{align*}

This  completes the proof.
\end{proof}

\subsection{Proof of Proposition \ref{prop:population-balance}}
\begin{proof}
Fix $\omega\in\{0,1\}$.  We have
\begin{align*}
\E\!\left\{
\delta_\omega
w_{\omega,K}(X)V_K(X)
\right\}
&=
\E\!\left[
\delta_\omega
V_K(X)V_K(X)^\top
\right]
Q_{\omega,K}^{-1}q_K
=
Q_{\omega,K}Q_{\omega,K}^{-1}q_K
=
q_K
=
\E\{V_K(X)\}.
\end{align*}
Now let $a\in\mathcal V_K$.  Then there exists $c\in\mathbb R^K$ such that $a(x)=c^\top V_K(x)$. Hence, 
\begin{align*}
\E\!\left\{
\delta_\omega
w_{\omega,K}(X)a(X)
\right\}=
c^\top
\E\!\left\{
\delta_\omega
w_{\omega,K}(X)V_K(X)
\right\}
=
c^\top\E\{V_K(X)\}
=
\E\{a(X)\},
\end{align*}
which proves \eqref{eq:population-balance-function}.
Taking $\omega=1$ and $\omega=0$ gives the final three-way
balancing identity.
\end{proof}

\subsection{Proof of Proposition~\ref{prop:inverse-ps-projection}}

\begin{proof}
Consider a generic element $V_K^\top b\in\mathcal V_K$.  The weighted $L^2$ projection of $1/\pi_\omega$ onto $\mathcal V_K$ solves
\[
\min_{b\in\mathbb R^K}
\E\left[
\delta_\omega
\left\{
\frac1{\pi_\omega(X)}-V_K(X)^\top b
\right\}^2
\right].
\]
The corresponding normal equations are
\[
\E\left[
\delta_\omega V_K(X)
\left\{
\frac1{\pi_\omega(X)}-V_K(X)^\top b
\right\}
\right]
=0.
\]
Since
\[
\E\left\{
\frac{\delta_\omega}{\pi_\omega(X)}V_K(X)
\right\}
=
\E\left[
\E\left\{
\frac{\delta_\omega}{\pi_\omega(X)}V_K(X)
\mid X
\right\}
\right]
=
\E\{V_K(X)\}
=
q_K,
\]
the normal equations reduce to
\[
q_K-Q_{\omega,K}b=0.
\]
Nonsingularity of $Q_{\omega,K}$ therefore gives the unique solution
\[
b=Q_{\omega,K}^{-1}q_K,
\]
and hence the projection is $w_{\omega,K}$, proving \eqref{eq:inverse-ps-projection}.

Finally,
\[
\rho_\omega=\frac1{\pi_\omega}-1.
\]
Since $1\in\mathcal V_K$, the constant function is projected onto itself.  By linearity of orthogonal projection,
\[
\Pi_{\mathcal V_K}^{(\omega)}\rho_\omega
=
\Pi_{\mathcal V_K}^{(\omega)}
\left(\frac1{\pi_\omega}-1\right)
=
w_{\omega,K}-1,
\]
which proves the final assertion.
\end{proof}

\subsection{Proof of Theorem~\ref{thm:implicit-dr}}

\begin{proof}
Fix $\omega\in\{0,1\}$.  By unconfoundedness,
\[
\E(Y\mid X,W=\omega)=m_\omega(X).
\]
The population least-squares normal equations associated with \eqref{eq:beta-star} therefore imply
\begin{equation}\label{eq:population-normal}
\E\left[
\delta_\omega V_K(X)
\{m_\omega(X)-m_{\omega,K}(X)\}
\right]
=0.
\end{equation}
In particular, because the sieve contains an intercept,
\begin{equation}\label{eq:population-intercept}
\E\left[
\delta_\omega
\{m_\omega(X)-m_{\omega,K}(X)\}
\right]
=0.
\end{equation}

Let
\[
r_{\omega,K}(X)
:=
m_{\omega,K}(X)-m_\omega(X).
\]
The contribution of treatment arm $\omega$ to the bias of the population imputation functional is
\[
\E\{(1-\pi_\omega(X))r_{\omega,K}(X)\}.
\]
Using \eqref{eq:population-intercept},
\[
\E\{(1-\pi_\omega)r_{\omega,K}\}
=
\E r_{\omega,K}
-
\E\{\pi_\omega r_{\omega,K}\}
=
\E r_{\omega,K}
=
B_{\omega,K}.
\]
Taking the treated contribution minus the control contribution gives \eqref{eq:dr-bias-compact}.

Next,
\[
B_{\omega,K}
=
\E r_{\omega,K}(X)
=
\E\left\{
\frac{\delta_\omega}{\pi_\omega(X)}
r_{\omega,K}(X)
\right\}.
\]
Because $w_{\omega,K}\in\mathcal V_K$, \eqref{eq:population-normal} gives
\[
\E\{\delta_\omega w_{\omega,K}(X)r_{\omega,K}(X)\}=0.
\]
Subtracting this zero term yields
\[
B_{\omega,K}
=
\E\left[
\delta_\omega
\left\{
\frac1{\pi_\omega(X)}-w_{\omega,K}(X)
\right\}
r_{\omega,K}(X)
\right],
\]
which is \eqref{eq:dr-bias-arm}.  Cauchy-Schwarz and Equations \eqref{eq:Delta-m} and \eqref{eq:Delta-rho-w} imply
\[
|B_{\omega,K}|
\le
\Delta_{\rho,\omega,K}\Delta_{m,\omega,K}.
\]
Summing the two treatment-arm contributions proves \eqref{eq:product-bound}.
\end{proof}

\subsection{Proof of Corollary~\ref{cor:implicit-dr}}

\begin{proof}
We first show that the sample estimator is consistent for the population imputation functional.  Fix $\omega\in\{0,1\}$ and define
\[
S_{\omega,K}
:=
\PP_n\left[
\delta_\omega V_K(X)
\{Y-m_{\omega,K}(X)\}
\right].
\]
The population normal equations imply $\E(S_{\omega,K})=0$.  Moreover, since $m_{\omega,K}$ is the weighted $L^2$ projection of $m_\omega$,
\[
\E\{\delta_\omega(m_\omega-m_{\omega,K})^2\}
\le
\E\{\delta_\omega m_\omega^2\}
\lesssim 1,
\]
and Assumption~\ref{ass:moments} also gives
\[
\E\{\delta_\omega(Y-m_{\omega,K})^2\}\lesssim 1.
\]
Consequently,
\[
\E\|S_{\omega,K}\|^2
\le
\frac{\zeta_K^2}{n}
\E\{\delta_\omega(Y-m_{\omega,K})^2\}
\lesssim
\frac{\zeta_K^2}{n},
\]
so that
\begin{equation}\label{eq:S-consistency-rate}
\|S_{\omega,K}\|
=
O_\P\left(\frac{\zeta_K}{\sqrt n}\right)
=
o_\P(1).
\end{equation}

From the least-squares normal equations,
\[
\widehat\beta_\omega-\beta_{\omega,K}^*
=
\widehat Q_{\omega,K}^{-1}S_{\omega,K},
\]
and therefore
\begin{align}
\widehat\mu_\omega
&=
\PP_n m_{\omega,K}(X)
+
\overline V_K^\top
\widehat Q_{\omega,K}^{-1}
S_{\omega,K}.
\label{eq:mu-consistency-decomp}
\end{align}
By \eqref{eq:q-Q-e1},
\[
\|\overline V_K\|
\le
\|q_K\|+\|\overline V_K-q_K\|
=
O_\P(1),
\]
because
\[
\E\|\overline V_K-q_K\|^2
=
\frac1n
\E\|V_K(X)-q_K\|^2
\lesssim
\frac K n
=o(1).
\]
Combining this with \eqref{eq:inverse-bound} and \eqref{eq:S-consistency-rate} shows that the second term in \eqref{eq:mu-consistency-decomp} is $o_\P(1)$.

The projection property and overlap imply
\[
\E\{m_{\omega,K}(X)^2\}
\le
c_e^{-1}\E\{\delta_\omega m_{\omega,K}(X)^2\}
\le
c_e^{-1}\E\{\delta_\omega m_\omega(X)^2\}
\lesssim 1.
\]
Moreover,
\[
\E\left[\{(\PP_n-\P)m_{\omega,K}\}^2\right]
=
\frac1n\Var\{m_{\omega,K}(X)\}
\le
\frac1n\E\{m_{\omega,K}(X)^2\}
\lesssim
\frac1n,
\]
and hence, by Chebyshev's inequality,
\[
(\PP_n-\P)m_{\omega,K}=o_\P(1).
\]
Finally, \eqref{eq:population-intercept} implies
\[
\E\left[
\delta_\omega Y+(1-\delta_\omega)m_{\omega,K}(X)
\right]
=
\E m_{\omega,K}(X).
\]
Thus
\[
\widehat\mu_\omega
-
\E\left[
\delta_\omega Y+(1-\delta_\omega)m_{\omega,K}(X)
\right]
=
o_\P(1).
\]
Taking the difference between $\omega=1$ and $\omega=0$ proves \eqref{eq:sample-pop}.

If the product of approximation errors tends to zero, Theorem~\ref{thm:implicit-dr} gives $\tau_K-\tau=o(1)$, and therefore
\[
\widehat\tau_{\mathrm{imp}}-\tau
=
(\widehat\tau_{\mathrm{imp}}-\tau_K)
+
(\tau_K-\tau)
=
o_\P(1).
\]
This completes the proof.
\end{proof}

\subsection{Proof of Theorem~\ref{thm:efficiency}}

\begin{proof}
We derive an asymptotic expansion for each potential-outcome mean.  Fix $\omega\in\{0,1\}$ and continue to write
\[
S_{\omega,K}
=
\PP_n\left[
\delta_\omega V_K(X)
\{Y-m_{\omega,K}(X)\}
\right].
\]
As in \eqref{eq:mu-consistency-decomp},
\[
\widehat\mu_\omega
=
\PP_n m_{\omega,K}(X)
+
\overline V_K^\top
\widehat Q_{\omega,K}^{-1}
S_{\omega,K}.
\]
Define
\[
g_{\omega,K}(Z)
:=
m_{\omega,K}(X)
+
\delta_\omega w_{\omega,K}(X)
\{Y-m_{\omega,K}(X)\}.
\]
We first show that
\begin{equation}\label{eq:series-linearization}
\widehat\mu_\omega
=
\PP_n g_{\omega,K}
+
o_\P(n^{-1/2}).
\end{equation}

Let
\[
D_{\omega,K}^\top
:=
\overline V_K^\top\widehat Q_{\omega,K}^{-1}
-
q_K^\top Q_{\omega,K}^{-1}.
\]
Then the remainder in \eqref{eq:series-linearization} is $D_{\omega,K}^\top S_{\omega,K}$.  First, we claim
\[
\|\overline V_K-q_K\|
=
O_\P\left(\sqrt{\frac K n}\right).
\]
To prove it, we have
\[
\overline V_K-q_K
=
\frac1n\sum_{i=1}^n
\bigl\{V_K(X_i)-q_K\bigr\}.
\]
Since $\E\{V_K(X_i)-q_K\}=0$, we obtain
\begin{align*}
\E\|\overline V_K-q_K\|^2
&=
\frac{1}{n^2}
\E\left\|
\sum_{i=1}^n
\{V_K(X_i)-q_K\}
\right\|^2
=
\frac1n
\E\|V_K(X)-q_K\|^2
\\
&\le
\frac1n
\E\|V_K(X)\|^2
=
\frac1n
\operatorname{tr}
\E\{V_K(X)V_K(X)^\top\}
=
\frac1n
\operatorname{tr}(Q_K).
\end{align*}
Under the eigenvalue condition that $\lambda_{\max}(Q_K)\lesssim 1$,  we then have
\[
\operatorname{tr}(Q_K)
\le
K\,\lambda_{\max}(Q_K)
\le
CK
\]
so that $\E\|\overline V_K-q_K\|^2\lesssim
\frac Kn$. Markov's inequality then yields the claim.

For the inverse-matrix contribution, write
\begin{align*}
q_K^\top(\widehat Q_{\omega,K}^{-1}-Q_{\omega,K}^{-1})
&=
q_K^\top Q_{\omega,K}^{-1}
(Q_{\omega,K}-\widehat Q_{\omega,K})
\widehat Q_{\omega,K}^{-1}.
\end{align*}
Set
\[
\gamma_{\omega,K}:=Q_{\omega,K}^{-1}q_K,
\qquad
w_{\omega,K}(x)=V_K(x)^\top\gamma_{\omega,K}.
\]
By Assumption~\ref{ass:efficiency}(iii),
\[
\left\|
\gamma_{\omega,K}^\top
(\widehat Q_{\omega,K}-Q_{\omega,K})
\right\|
=
\left\|
(\PP_n-\P)
\{\delta_\omega w_{\omega,K}(X)V_K(X)^\top\}
\right\|
=
O_\P\left(\sqrt{\frac K n}\right),
\]
because
\[
\E\{\delta_\omega w_{\omega,K}(X)^2\|V_K(X)\|^2\}
\le
C_w^2\,\operatorname{tr}(Q_{\omega,K})
\lesssim K.
\]
Together with \eqref{eq:inverse-bound}, this yields
\begin{equation}\label{eq:D-rate}
\|D_{\omega,K}\|
=
O_\P\left(\sqrt{\frac K n}\right).
\end{equation}

Next decompose
\[
S_{\omega,K}
=
\PP_n\{\delta_\omega V_K(X)(Y-m_\omega(X))\}
+
\PP_n\{\delta_\omega V_K(X)(m_\omega(X)-m_{\omega,K}(X))\}.
\]
Both summands have mean zero.  Assumption~\ref{ass:moments}, overlap, and the uniform eigenvalue bounds imply
\[
\left\|
\PP_n\{\delta_\omega V_K(X)(Y-m_\omega(X))\}
\right\|
=
O_\P\left(\sqrt{\frac K n}\right),
\]
whereas the second term satisfies
\[
\left\|
\PP_n\{\delta_\omega V_K(X)(m_\omega(X)-m_{\omega,K}(X))\}
\right\|
=
O_\P\left(
\frac{\zeta_K\Delta_{m,\omega,K}}{\sqrt n}
\right).
\]
Hence
\begin{equation}\label{eq:S-eff-rate}
\|S_{\omega,K}\|
=
O_\P\left(
\sqrt{\frac K n}
+
\frac{\zeta_K\Delta_{m,\omega,K}}{\sqrt n}
\right).
\end{equation}

Since
\[
\zeta_K^2
\ge
\E\|V_K(X)\|^2
=
\operatorname{tr}(Q_K)
\ge
c_QK,
\]
Assumption~\ref{ass:efficiency}(i) implies $K/\sqrt n\to0$ as well as
\[
\frac{\zeta_K\sqrt K}{\sqrt n}\to0.
\]
Combining \eqref{eq:D-rate}, \eqref{eq:S-eff-rate}, and $\Delta_{m,\omega,K}\to0$ gives
\[
\sqrt n\,
|D_{\omega,K}^\top S_{\omega,K}|
=
o_\P(1),
\]
which proves \eqref{eq:series-linearization}.

By the population normal equations \eqref{eq:population-normal} and the fact that $w_{\omega,K}\in\mathcal V_K$,
\[
\E\left[
\delta_\omega w_{\omega,K}(X)
\{m_\omega(X)-m_{\omega,K}(X)\}
\right]
=0.
\]
Therefore,
\[
\P g_{\omega,K}
=
\E m_{\omega,K}(X)
=
\mu_\omega+B_{\omega,K}.
\]
It follows from \eqref{eq:series-linearization} that
\begin{equation}\label{eq:mu-bias-expansion}
\widehat\mu_\omega-\mu_\omega
=
(\PP_n-\P)g_{\omega,K}
+
B_{\omega,K}
+
o_\P(n^{-1/2}).
\end{equation}
By Theorem~\ref{thm:implicit-dr} and Assumption~\ref{ass:efficiency}(ii),
\begin{equation}\label{eq:bias-rootn}
\sqrt n\,|B_{\omega,K}|
\le
\sqrt n\,
\Delta_{\rho,\omega,K}\Delta_{m,\omega,K}
\to0.
\end{equation}

Now define
\[
g_\omega(Z)
:=
m_\omega(X)
+
\frac{\delta_\omega}{\pi_\omega(X)}
\{Y-m_\omega(X)\}.
\]
We claim that
\begin{equation}\label{eq:g-L2}
\E\{g_{\omega,K}(Z)-g_\omega(Z)\}^2
\to0.
\end{equation}
To prove \eqref{eq:g-L2}, we observe that
\begin{align*}
g_{\omega,K}-g_\omega
={}&
\{m_{\omega,K}(X)-m_\omega(X)\}
\{1-\delta_\omega w_{\omega,K}(X)\}+
\delta_\omega
\left\{
w_{\omega,K}(X)-\frac1{\pi_\omega(X)}
\right\}
\{Y-m_\omega(X)\}.
\end{align*}
For the first term, overlap and Assumption~\ref{ass:efficiency}(iii) give
\[
\E\left[
\{m_{\omega,K}-m_\omega\}^2
\{1-\delta_\omega w_{\omega,K}\}^2
\right]
\lesssim
\E(m_{\omega,K}-m_\omega)^2
\lesssim
\Delta_{m,\omega,K}^2
\to0.
\]
For the second term, Assumption~\ref{ass:moments}, unconfoundedness, and \eqref{eq:Delta-rho-w} give
\[
\E\left[
\delta_\omega
\left\{
w_{\omega,K}-\frac1{\pi_\omega}
\right\}^2
\{Y-m_\omega\}^2
\right]
\lesssim
\Delta_{\rho,\omega,K}^2
\to0.
\]
This proves \eqref{eq:g-L2}.  Since $g_{\omega,K}$ is nonrandom for each $n$,
\[
\sqrt n(\PP_n-\P)(g_{\omega,K}-g_\omega)
=
o_\P(1)
\]
by Chebyshev's inequality.  Combining this fact with \eqref{eq:mu-bias-expansion} and \eqref{eq:bias-rootn} yields
\begin{equation}\label{eq:mu-AL}
\widehat\mu_\omega-\mu_\omega
=
\frac1n\sum_{i=1}^n
\{g_\omega(Z_i)-\mu_\omega\}
+
o_\P(n^{-1/2}).
\end{equation}

Taking the difference between $\omega=1$ and $\omega=0$ in \eqref{eq:mu-AL} gives
\[
\widehat\tau_{\mathrm{imp}}-\tau
=
\frac1n\sum_{i=1}^n\psi(Z_i)
+
o_\P(n^{-1/2}),
\]
with $\psi$ defined in \eqref{eq:eif}.  This proves \eqref{eq:AL}.

Under overlap and Assumption~\ref{ass:moments}, $\psi(Z)$ has finite variance.  The classical central limit theorem therefore gives
\[
\sqrt n(\widehat\tau_{\mathrm{imp}}-\tau)
\xrightarrow{d}
N(0,\Var\{\psi(Z)\}).
\]
A direct conditional-variance calculation gives
\[
\Var\{\psi(Z)\}
=
V_{\mathrm{eff}},
\]
where $V_{\mathrm{eff}}$ is defined in \eqref{eq:eff-bound}.  
\end{proof}

\section*{Acknowledgment of AI use}

The authors used AI during the preparation of this manuscript for language editing, literature exploration and cross-checking, and assistance in reviewing the exposition and technical arguments.  All mathematical results, proofs, interpretations, and cited references were independently checked and verified by the authors, who take full responsibility for the contents of the paper.

{
\bibliographystyle{apalike}
\bibliography{AMS}
}

\end{document}